\numberwithin{equation}{section}
\newtheorem{theorem}[equation]{Theorem}
\newtheorem{cor}[equation]{Corollary}
\newtheorem{lemma}[equation]{Lemma}
\newtheorem{remark}[equation]{Remark}
\newtheorem{algorithm}{Algorithm}[section]
\newtheorem{assumption}{Assumption}[section]
\newtheorem{definition}[equation]{Definition}
\def \bX{{\mathbf X}}
\def \bV{{\mathbf V}}
\def \bH{{\mathbf H}}
\def \bu{{\mathbf u}}
\def \bs{{\mathbf s}}
\def \blf{{\mathbf f}}
\newcommand{\bphi}{{\boldsymbol \phi}}
\newcommand{\btau}{{\boldsymbol \tau}}
\newcommand{\bdiv}{{\boldsymbol \div}}
\newcommand{\bleta}{{\boldsymbol \eta}}
\newcommand{\taus}{{\tau_s}}
\newcommand{\eps}{{\varepsilon}}
\def \bw{{\mathbf w}}
\def \be{{\mathbf e}}
\def \bh{{\mathbf h}}
\newcommand{\bv}{{\bf v}}
\newcommand{\bg}{{\bf g}}
\newcommand{\bx}{{\bf x}}
\def\div{\operatorname{div}}
\DeclareMathOperator{\argmin}{argmin}
\newcommand{\nr}[1]{\ensuremath{\left\|{#1}\right\|}}
\journal{arXiv}
\begin{document}
	
	\begin{frontmatter}
		
		

		
		\title{Anderson acceleration for a regularized Bingham model}
		
		\author[label2]{Sara Pollock}
		\ead{s.pollock@ufl.edu}
		\address[label2]{Department of Mathematics, University of Florida, Gainesville, FL 32611, USA}
		\author[label1]{Leo G. Rebholz\corref{cor1}\fnref{lab1}}
		\ead{rebholz@clemson.edu}
		\author[label1]{Duygu Vargun\fnref{lab1}}
		\ead{dvargun@clemson.edu}
		\address[label1]{Department of Mathematical Sciences, Clemson University, Clemson, SC 29634, USA}
		
		\cortext[cor1]{Corresponding  author.}
		
		\begin{abstract}
			This paper studies a finite element discretization of the regularized Bingham equations that describe viscoplastic flow. 
			An efficient nonlinear solver for the discrete model is then proposed and analyzed.  The solver is based
			on Anderson acceleration (AA) applied to a Picard iteration, and we show accelerated convergence of the method by applying AA theory (recently developed by the authors) to
			the iteration, after showing sufficient smoothness properties of the associated fixed point operator.  Numerical tests of spatial convergence are provided, as are results of 
			the model for 2D and 3D driven cavity simulations.  For each numerical test, the proposed nonlinear solver is also tested and shown to be very effective and robust with respect to the regularization parameter as it goes to zero. 
		\end{abstract}
		
		
		
		\begin{keyword}
			Anderson acceleration,
			Bingham fluid,
			fixed-point iteration
			
			
			
		\end{keyword}
		
	\end{frontmatter}
	
	
	
	\section{Introduction}\label{introsection}
	A Bingham plastic is a material that as a solid at lower shear stress but flows with a constant viscosity when larger shear stress is applied \cite{B22}. Fresh concrete, dough, blood in the capillaries, muds, toothpaste, and ketchup are a few examples of such materials \cite{DGG07}. These applications motivate researchers from several fields and industries to study their behavior, their mathematical formulations and 
	properties, and to develop software to perform simulations \cite{BDY83}.
	
	The governing equation of Bingham plastics is given by
	\begin{equation}\label{binghampde}
		\begin{aligned}
			-\bdiv \btau + \nabla p &= \blf\\
			\nabla\cdot \bu &= 0
		\end{aligned}
	\end{equation}
	in a bounded and connected domain $\Omega\subset\mathbb{R}^d,\ d=2,3$.
	Here, $\btau$ is the $\bu$ is the fluid velocity and $p$  is the pressure.
	
	The change in the behavior of Bingham plastic occurs after the applied stress $\tau$ exceeds a certain threshold called the yield stress, which is denoted by $\taus$. {\color{black} $D\bu=\frac{1}{2}[\nabla D\bu + \nabla D\bu^T]$ is strain rate tensor defined as a symmetric part of velocity gradient and $|D\bu|=\sqrt{D\bu:D\bu}$ is the Frobenius norm of $D\bu$. If $|D\bu|\neq 0$, then $|\btau|=|2\mu D\bu + \taus \frac{D\bu}{|D\bu|}|>\taus$ and so Bingham plastic behaves a as fluid and the equations describing the flow of a Bingham plastic are given by
		\begin{equation}\label{binghampde2}
		\begin{aligned}
			-\bdiv \left( \hat \mu D\bu \right) + \nabla p &= \blf,\\
			\nabla\cdot \bu &= 0,
		\end{aligned}
	\end{equation}
where $\hat \mu= 2\mu+\frac{\taus}{|D\bu|}$ is the shear-dependent viscosity with given problem dependent constants which are the plastic viscosity $\mu>0$ and  the yield stress $\taus\geq 0$,

On the other hand, if $|D\bu|= 0$, then $|\btau|\leq\taus$ and so Bingham plastic behaves as solid material, and the equation (\ref{binghampde}) is not valid. Hence, the domain of Bingham plastics can be split into two subdomains, determined by relationship $\btau$ and $\taus$, and this relation can be rewritten as
	\begin{align*}
	D\bu=  \left\{
	\begin{array}{ll}
		0 & \text{for}\ |\btau|\leq \taus\ (\text{rigid region}:\Omega_r),\\
		\left(1-\frac{\taus}{|\btau|}\right) \frac{\btau}{2\mu}& \text{for}\ |\btau|> \taus\ (\text{fluid region}:\Omega_f).
	\end{array} 
	\right.
\end{align*}
In the fluid region $\Omega_f$, Bingham plastics behave like a fluid, and the equations (\ref{binghampde}) turn into \eqref{binghampde2} and they can be viewed as a generalization of the Stokes equations having a shear-dependent viscosity $\hat{\mu}$. In the case of no yield stress; i.e. $\taus=0$, (\ref{binghampde}) reduces exactly to Stokes equations with constant viscosity $\mu$. In the rigid (or plug) region $\Omega_r$, Bingham plastics behave like a solid, the equation (\ref{binghampde}) cannot describe the any motion of the solid material.
}

There are two major difficulties associated with solving the Bingham equations: the interface between the rigid and fluid regions is not known a priori, and $\hat{\mu}$ becomes singular in the rigid region since $|D\bu| = 0$. There are two main approaches to handle these difficulties. One is to reformulate the problem as a variational inequality \cite{DL76,G84} and to use either operator splitting methods \cite{DGG07, HK15, S98} or an augmented Lagrangian approach \cite{Z10, GT89}. The other is to introduce regularization for {\color{black}$\btau$,}
which circumvents both issues but introduces a consistency error. The most common types of regularization are proposed by Papanastasiou \cite{P87} and Bercovier-Engelmann \cite{BE1980}, but other types have been considered \cite{M07, TM83}.
	
	We consider the Bercovier-Engelman regularization, in which $|D\bu|$ is replaced by $|D\bu|_{\varepsilon}=\sqrt{D\bu:D\bu+\varepsilon^2}$ in the shear-dependent viscosity $\hat{\mu}$ {\color{black}in \eqref{binghampde2}}, with $\varepsilon$ denoting the regularization parameter. 
	This regularized formulation provides one nonsingular system for {\color{black}the entire domain} $\Omega$:
	\begin{equation}\label{regeqn}
		\begin{aligned}
			-\div\left(2\mu+\frac{\tau_s}{|D\bu|_{\varepsilon}} \right)D\bu+\nabla p= \blf,\\
			-\nabla\cdot\bu=0.
		\end{aligned}
	\end{equation}
	With \eqref{regeqn}, the entire domain is treated computationally as a single region.
	The approximated plug region can be recovered by inspecting regions of high viscosity. 
	However, there is an obvious drawback in that any regularization affects the accuracy of results due to physical inconsistency.  As is known from \cite{GO09}, the regularized problem (\ref{regeqn}) provides an approximate solution for non-regularized Bingham problem (\ref{binghampde}) which satisfies only
	\begin{align*}
		\|D(\bu-\bu_{nonreg})\|\leq C\sqrt{\eps}
	\end{align*}
	where $\bu$ is the solution of  (\ref{regeqn}) and $\bu_{nonreg}$ is the solution of  (\ref{binghampde}).  Thus not surprisingly, one must choose small $\eps$ for good accuracy  \cite{DG02, S2000, FN05}.  Unfortunately, as we discuss below, small $\eps$ causes solvers to fail.  The purpose of this paper is to propose a method that is both accurate and also robust for small $\eps$.
	
	Nonlinear solvers used for solving the regularized Bingham model are typically iterative schemes of Newton or Picard type, however there are drawbacks with both of these approaches.
	{\color{black} The issue with the standard Picard iteration is that convergence is slow and may not be guaranteed, 
	especially as $\eps$ gets small \cite{AHOV11}.   Convergence can be improved by introducing an auxiliary tensor variable as in \cite{AHOV11}, however this makes solving the linear systems at each iteration more difficult.  Using a Newton iteration instead of Picard can 
	provide quadratic convergence, but at the expense of more difficult linear system solves at each iteration.  Moreover, Newton's domain of convergence is not robust with respect to $\eps$ (see \cite{DG02} and numerical results in \cite{GO09,HOT03}).  We note that in general, analytical convergence results for iterative solvers for regularized Bingham is lacking in the literature. }
	
	One aim in this paper is to improve the Picard iteration for the regularized Bingham problem \eqref{regeqn} considered in \cite{AHOV11} by enhancing it with Anderson acceleration (AA), an extrapolation technique introduced in \cite{Anderson65}. AA has recently been used to improve convergence and robustness of nonlinear solvers for a wide range of problems including various types of flow problems \cite{LWWY12,PRX19,PRX21, LVX21}, 
	molecular interaction \cite{SM11}, 
	and many others e.g. \cite{WaNi11,K18,LW16,LWWY12,FZB20,WSB21}.  Hence applying it in this setting seems a natural next step. Indeed we show herein both theoretically and in numerical tests that AA-enhanced Picard maintains the Picard iteration's simplicity but provides it with much better efficiency and robustness, in particular for small $\eps$. {\color{black} For the sake of simplicity of the analysis, we consider homogeneous Dirichlet boundary condition. However, the extending the analysis to mixed Dirichlet/Neumann boundary problems is straightforward.
	}

	In addition to the study of nonlinear solvers, we will also consider the accuracy of a standard mixed finite element approximation of the regularized Bingham equations.  While some results exist in the literature
	for related variational inequality formulations \cite{DL76,G84} and particular low order stabilized elements \cite{LV04,FL18}, there seems to be not much done for general mixed finite element approximations.  
	 This may be due to the difficulty in solving the system resulting from 
		standard mixed methods as conventional nonlinear solvers will not converge for even 
		moderately small $\eps$ \cite{AHOV11}. 
		Here AA is seen to be an enabling technology
		as the solver now remains robust for small $\eps$.  
		Hence, for completeness, we include a spatial convergence analysis for mixed finite 
		elements applied to the regularized Bingham equations. We find the expected result that 
		optimal convergence can be obtained but is inversely dependent on $\eps$, but we also 
		find that suboptimal convergence (by one order) can be obtained that is independent of 
		$\eps$.  With very small $\eps$, it is the latter result that is expected in practice and in our numerical tests we do not see any significant negative scaling with $\eps$.

	
	This paper is arranged as follows: Section 2 provides notation and mathematical preliminaries on the finite element discretization and AA. Section 3 presents the Picard iteration to solve the regularized Bingham equations and proves properties of the associated fixed point solution operator. Then, we give 
{\color{black}an acceleration result for}
AA applied to a Picard iteration. In section 4, we provide the results of several numerical tests, which demonstrate a significant positive impact of AA on the convergence. Finally, we provide convergence analysis the finite element discretization of the regularized Bingham equations, to support the numerical results in section 4 which indicate no negative scaling with $\eps$.
	
	\section{Mathematical Preliminaries}\label{prelimsection}
	We consider a domain $\Omega\subset\mathbb{R}^d\ (d=2,\ 3)$ which is polygonal for $d=2$
	or polyhedral for $d=3$ (or $\partial\Omega\in C^{0,1}$). 
	The notation $\|\cdot\|$ and $(\cdot,\cdot)$ will be used to denote the $ L^2(\Omega) $ norm and inner product. The $H^{k}(\Omega)$ seminorm will be denoted by $|\cdot|_{k}$. We use boldface letters for vector-valued functions.
	
	The natural velocity and pressure spaces for the Bingham equations are given by
	\begin{align*}
		\bX&:=(H_{0}^{1}(\Omega))^d=\{\bv\in L^2(\Omega)^d:\nabla\bv\in L^2(\Omega)^{d\times d}\ \textnormal{and}\ \bv=\textbf{0}\ \textnormal{on}\ \partial\Omega\},\nonumber\\
		Q&:=L^{2}_{0}(\Omega)=\{q\in L^2(\Omega):\int_{\Omega} q\ d\bx=0 \}.\nonumber
	\end{align*}
	The Poincar\'e-Friedrichs' inequality is known to hold in $\bX$: For every $\bu\in\bX$,
	\begin{align*}
		\|\bv\|\leq C_F \|\nabla\bv\|,
	\end{align*} 
	where $C_F$ constant depending on the size of $\Omega$. Also, we define the divergence-free vector function space by
	\begin{align*}
		\bV:=\{ \bv\in \bX: (\nabla\cdot\bv,q)=0\ \forall q\in Q \}.
	\end{align*}
	From the vector identities $2\textbf{div} D=\Delta+\nabla\nabla\cdot$ and $\nabla\nabla\cdot=\Delta+\nabla\times\nabla\times\nabla$ applying integration by parts one gets the following Korn type inequalities
	\begin{align*}
		\|\nabla\bv\|\leq C_k \|D\bv\|,
	\end{align*}
	for all $\bv\in\bX.$
	
	The weak formulation of (\ref{regeqn}) can be written as follows: find $\bu\in \bX$ and $p\in Q$ such that
	\begin{equation}
		\begin{aligned}\label{weak}
			2\mu ( D\bu, D\bv ) 
			+\tau_s
			\left(\frac{D\bu}{|D\bu|_{\varepsilon}},D\bv\right)
			-( p, \nabla\cdot\bv)
			&=
			(\blf,\bv),\\
			(q, \nabla\cdot\bu) &=0.
		\end{aligned}
	\end{equation}
	Existence and the uniqueness of solutions can be proven by the Browder–Minty method of strictly monotone operators \cite{BS08},  for any $\eps>0$ and {\color{black} $f\in H^{-1}(\Omega)$} \cite{AHOV11}.
{	\color{black}
\begin{remark}
While the well-posedness of the system holds for any fixed $\eps>0$, as $\eps$ goes to zero, the bounds used for regularity and uniqueness blow up \cite{AHOV11} and there is 
 no rigorous study to extend the results in \cite{AHOV11} to the limit case of $\varepsilon=0$.  Still, the well-posedness of the unregularized system holds in 2D \cite{DL76,FN05} and in 3D existence is known but uniqueness is seemingly an open problem \cite{DL76}, these results are proved with different techniques, which suggests a potential gap in the known analysis for Bingham.
\end{remark}
}
	\subsection{Discretization Preliminaries}
	For the discrete setting, we assume a regular conforming triangulation $\tau_h(\Omega)$ with maximum element diameter $h$. Let $(\bX_h,Q_h)\subset (\bX,Q)$ be pair of discrete velocity-pressure spaces satisfying the LBB condition: there exists a constant $\beta$, independent of $h$ satisfying
	\begin{align}\label{discinfsup}
		\inf_{q\in Q_h} \sup_{\bv\in \bX_h}\frac{(\nabla \cdot\bv_h, q_h)}{\|q_h\| \|\nabla\bv_h\|}\geq \beta >0.
	\end{align}
	For simplicity, we assume $\bX_h=\bX\bigcap P_s(\tau_h)$ and $Q_h=Q\bigcap P_{r}(\tau_h)$, however, the analysis that follows can be applied to any inf-sup stable pair with only minor modifications.
	
	The space for discrete divergence free functions is
	$$\bV_h:=\{ \bv\in \bX_h: (\nabla\cdot\bv_h,q_h)=0\ \forall q_h\in Q_h \}.$$
	We assume the mesh is sufficiently regular for the inverse inequality to hold: there exists a constant $ C $ such that for all $\bv_h\in\bX_h$,
	\begin{align}\label{invineq}
		\|\nabla \bv_h\|\leq C h^{-1} \|\bv_h\|,
	\end{align}
	and with this and the LBB assumption, we assume interpolation operator $I_h: H^1(\Omega)\rightarrow \bV_h$ satisfying for all $\bv\in \bV$,
	\begin{align*}
		\|\bv-I_h(\bv)\| & \leq C h^{s+1} |\bv|_{s+1}, \\
		\|\nabla(\bv-I_h(\bv))\| & \leq C h^{s}|\bv|_{s+1}.
	\end{align*}
	We recall the following discrete Sobolve inequality in $\Omega\subset\mathbb{R}^2 $ (see \cite{BS08}),
	\begin{align}\label{discsobolevineq}
		\|\bv_h\|_{L^{\infty}(\Omega)}\leq C(1+|\ln h|)^{1/2} \|\nabla \bv_h\|\ \forall\bv\in \bX_h,
	\end{align}
	and for $\Omega\subset\mathbb{R}^3 $ and a quasi-uniform triangulation of $\Omega$, it follows from an Agmon's inquality and a standard inverse estimate \cite{BS08}
	that
	\begin{align}\label{discsobolevineq3D}
		\|\bv_h\|_{L^{\infty}(\Omega)}\leq C h^{-1/2} \|\nabla \bv_h\|\ \forall\bv\in \bX_h,
	\end{align}
	where $C$ is positive constant and independent of $h$.
	\subsection{Finite element Discretization of regularized Bingham equations}
	In this section, we present a FEM scheme for regularized Bingham equations (\ref{regeqn}). First, we define the FEM scheme as follows: Find  $(\bu_h,q)\in (\bX_h,Q_h)$ such that
	\begin{equation}
		\begin{aligned}\label{fem}
			2\mu ( D\bu_h, D\bv_h ) 
			+\tau_s\left(\frac{D\bu_h}{|D\bu_h|_{\varepsilon}},D\bv_h\right)
			-( p_h, \nabla\cdot\bv_h)
			&=
			(\blf,\bv_h),\\
			(q_h, \nabla\cdot\bu_h) &=0,
		\end{aligned}
	\end{equation}
	for all $ (\bv_h,q_h)\in(\bX_h,Q_h)$.
	
	The scheme (\ref{fem}) restricted to discretely divergence-free function space $\bV_h$ for velocity reads: Find  $\bu_h\in \bV_h$ such that for all  $\bv_h\in \bV_h,$
	\begin{align}\label{discdivfreeprob}
		a_{\eps}(\bu_h,\bv_h)
		=
		2\mu ( D\bu_h, D\bv_h ) 
		+\taus\left(\frac{D\bu_h}{|D\bu_h|_{\varepsilon}},D\bv_h\right)
		=
		(\blf,\bv_h).
	\end{align}
	We note due to the assumed LBB condition that \eqref{fem} and \eqref{discdivfreeprob} are equivalent.
	
	The well-posedness of scheme (\ref{discdivfreeprob}) follows the same as the well-posedness proof in \cite{AHOV11} for the analogous variational formulation posed in $\bV$ instead of $\bV_h$. The key steps rely on monotonicity which can be shown as follows: 
	\begin{equation}\label{monotonicityfem}
		\begin{aligned}
			a_{\varepsilon}&(\bu_h,\bu_h-\bv_h)-a_{\varepsilon}(\bv_h,\bu_h-\bv_h)
			\\
			&=
			\int_{\Omega} 2\mu |D\bu_h-D\bv_h|^2 + \taus \left( \frac{D\bu_h -D\bv_h}{|D\bu_h|_{\eps}} + \left(\frac{1}{|D\bu_h|_{\eps}}  -\frac{1}{|D\bv_h|_{\eps}}\right) D\bv_h  \right) : (D\bu_h - D\bv_h)
			\\
			&=
			\int_{\Omega} 2\mu |D\bu_h-D\bv_h|^2 + \frac{\taus}{|D\bu_h|_{\eps}} \left( |D\bu_h-D\bv_h|^2 - \frac{|D\bu_h|_{\eps}-|D\bv_h|_{\eps}}{|D\bv_h|_{\eps}} D\bv_h : (D\bu_h - D\bv_h) \right) 
			\\
			&\geq
			\int_{\Omega} 2\mu |D\bu_h-D\bv_h|^2 + \frac{\taus}{|D\bu_h|_{\eps}} \left( |D\bu_h-D\bv_h|^2 - \frac{|D\bu_h-D\bv_h|}{|D\bv_h|_{\eps}} D\bv_h : (D\bu_h - D\bv_h) \right) 
			\\
			&\geq
			2\mu \|D\bu_h-D\bv_h\|^2,
		\end{aligned}
	\end{equation}	
	where $||D\bv_h|^{-1}_{\eps}D\bv_h|\leq 1$. The Browder-Minty theorem then guarantees
	existence and uniqueness of the solution.
	\subsection{Anderson acceleration}\label{aasection}
	Anderson acceleration (AA) is an extrapolation technique that is used to improve convergence of fixed-point iterations. Consider a fixed-point operator $g:Y\rightarrow Y$ where Y is a normed vector space. The AA procedure is stated in the following algorithm: Denote
	$w_{j} = g(x_{j-1}) - x_{j-1}$ as the nonlinear residual, 
	also sometimes referred to as the update step.
	\begin{algorithm} \label{alg:anderson}
		(Anderson acceleration with depth $m$ and damping factors $\beta_k$)\\ 
		Step 0: Choose $x_0\in Y.$\\
		Step 1: Find $w_1\in Y $ such that $w_1 = g(x_0) - x_0$ {\color{black}where $g(x_0)=x_1$.}
		Set $x_1 = x_0 + w_1$. \\
		Step $k$: For $k=2,3,\ldots$ Set $m_k = \min\{ k-1, m\}.$\\
		\indent [a.] Find $w_{k} = g(x_{k-1})-x_{k-1}$. \\
		\indent [b.] Solve the minimization problem for the Anderson coefficients $\{ \alpha_{j}^{k}\}_{k-m_k}^{k-1}$
		\begin{align}\label{eqn:opt-v0}
			\{ \alpha_{j}^{k}\}_{k-m_k}^{k-1}=\textstyle \text{argmin} 
			\left\| \left(1- \sum\limits_{j=k-m_k}^{k-1} \alpha_j^{k} \right) w_k + \sum\limits_{j = k-m_k}^{k-1}  \alpha_j^{k}  w_{j} \right\|_Y.
		\end{align}
		\indent [c.] For damping factor $0 < \beta_k \le 1$, set
		\begin{align}\label{eqn:update-v0}
			\textstyle
			x_{k} 
			= (1-\sum\limits_{j = k-m_k}^{k-1}\alpha_j^k) x_{k-1} + \sum_{j= k-m_k}^{k-1} \alpha_j^{k} x_{j-1}
			+ \beta_k \left(  (1- \sum\limits_{j= k-m_k}^{k-1} \alpha_j^{k}) w_k + \sum\limits_{j=k-m_k}^{k-1}\alpha_j^k w_{j}\right).
		\end{align}
	\end{algorithm}
	The $m=0$ case is equivalent to the fixed point iteration without acceleration. To understand how AA improves convergence, define matrices $ E_k $ and $ F_k $, whose columns are the consecutive differences between iterates and residuals, respectively.
	\begin{align}
		E_k&:=\left( e_{k-1}\ e_{k-2}\ \dotsc\ e_{k-m_k}   \right), e_j=x_j-x_{j-1} \label{Fk1}\\
		F_k&:=\left( (w_{k}-w_{k-1})(w_{k-1}-w_{k-2})\ \dotsc\ (w_{k-m_k+1}-w_{k-m_k})  \right).\label{Fk2}
	\end{align}
	Then defining $\gamma^k=\argmin_{\gamma\in\mathbb{R}^m}\| w_k -F_k\gamma \|_Y$, the update step \eqref{eqn:update-v0} can be written as
	\begin{align*}
		x_k=x_{k-1} - \beta_k w_k - (E_k + \beta_k F_k) \gamma^k = x_{k-1}^{\alpha} + \beta_k w_k^{\alpha},
	\end{align*}
	where $w_k^{\alpha}=w_k - F_k \gamma^k$ and $x^{\alpha}_{k-1}=x_{k-1}-E_{k-1} \gamma^k$ are the averages corresponding to the solution from the optimization problem. The optimization gain factor $\theta_k$ may be defined by
	\begin{align*}
		\|w_k^{\alpha}\|=\theta_k \|w_k\|.
	\end{align*}
	As shown in the recent theory proposed in \cite{PR21, PRX19}, the gain factor $\theta_k$ is the key to acceleration.
	
	The next two assumptions from \cite{PR21} provide sufficient conditions on the fixed point operator $g$ for the 
acceleration results developed
	therein to hold.
	\begin{assumption}\label{assume:g}
		Assume $g\in C^1(Y)$ has a fixed point $x^\ast$ in $Y$, and there are positive constants $C_0$ and $C_1$ with
		\begin{enumerate}
			\item $\|g'(x)\|_{Y}\le C_0$ for all $x\in Y$, and 
			\item $\|g'(x) - g'(y)\|_{Y} \le C_1 \|x-y\|_{Y}$
			for all $x,y \in Y$.
		\end{enumerate}
	\end{assumption}
	\begin{assumption}\label{assume:fg} 
		Assume there is a constant $\sigma> 0$ for which the differences between consecutive
		residuals and iterates satisfy
		\begin{align} \label{eqn:assumefg}
			\| w_{{k}+1} - w_{k}\|_Y  \ge \sigma \| x_{k} - x_{{k}-1} \|_Y, \quad {k} \ge 1.
		\end{align}
	\end{assumption}
	Assumption \ref{assume:g} describes properties of the underlying fixed-point operator.
	Both parts of this assumption will {\color{black}be verified} for the Picard fixed-point operator
	for this problem in the analysis that follows.
	Assumption \ref{assume:fg} is harder to verify for this problem. It is globally 
	satisfied for instance if $g$ is contractive, which is generally not the case here, or
	locally if the Jacobian of $g$ can be shown not to degenerate in the vicinity of a
	solution, as discussed in \cite{PR21}. 
	On the other hand, \eqref{eqn:assumefg} can be checked at each iteration as it only 
	involves the differences between iterates and update steps that have already been 
	computed. 
	Assumption \ref{assume:fg} 
	can then be enforced for instance by the following safeguarding 
	strategy: given some chosen $\bar \sigma>0$, 
	on any step for which \eqref{eqn:assumefg} is not satisfied with $\sigma = \bar \sigma$,
	the next iterate can be given by the simple fixed-point iteration, after 
	which AA can be restarted. 
	We found it was not necessary to implement this strategy here, however.
	We demonstrate this in section 4, where we calculate the ratio 
	$\| w_{k+1}-w_k\|_Y / \|x_k -x_{k-1}\|_X$ 
	using varying fixed $m$ on a benchmark problems and find $\sigma$ bounded well above 0.

	Under Assumptions \ref{assume:g} and \ref{assume:fg}, the following result summarized from \cite{PR21}, produces a one-step bound on the residual $\|w_{k+1}\|$ in terms of the previous residual $\|w_k\|$.
	\begin{theorem}[Pollock et al., 2021]  \label{thm:genm}
		Let Assumptions \ref{assume:g} and \ref{assume:fg} hold,
		and suppose the direction sines between each column $i$ of $F_j$ defined by 
		\eqref{Fk1} and the subspace spanned by the preceeding columns satisfy
		$|\sin(f_{j,i},\text{span }\{f_{j,1}, \ldots, f_{j,i-1}\})| \ge c_s >0$,
		for $j = k-m_k, \ldots, k-1$.
		Then the residual $w_{k+1} = g(x_k)-x_k$ from Algorithm \ref{alg:anderson} 
		(depth $m$) satisfies the following bound.
		\begin{align}\label{eqn:genm}
			\nr{w_{k+1}} & \le \nr{w_k} \Bigg(
			\theta_k ((1-\beta_{k}) + C_0 \beta_{k})
			+ \frac{C C_1\sqrt{1-\theta_k^2}}{2}\bigg(
			\nr{w_{k}}h(\theta_{k})
			\nonumber \\ &
			+ 2  \sum_{n = k-{m_{k}}+1}^{k-1} (k-n)\nr{w_n}h(\theta_n) 
			+ m_{k}\nr{w_{k-m_{k}}}h(\theta_{k-m_{k}})
			\bigg) \Bigg),
		\end{align}
		where  each $h(\theta_j) \le C \sqrt{1 - \theta_j^2} + \beta_{j}\theta_j$,
		and $C$ depends on $c_s$ and the implied upper bound on the direction cosines. 
	\end{theorem}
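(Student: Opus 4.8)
The plan is to derive the bound directly from the update formula \eqref{eqn:update-v0}, written compactly as $x_k = x_{k-1}^\alpha + \beta_k w_k^\alpha$ with averaged iterate $x_{k-1}^\alpha = x_{k-1} - E_k\gamma^k$ and averaged residual $w_k^\alpha = w_k - F_k\gamma^k$. The natural starting point is to expand the new residual $w_{k+1} = g(x_k) - x_k$, insert the term $g(x_{k-1}^\alpha)$, and use $g(x_{k-1}^\alpha) - x_k = g(x_{k-1}^\alpha) - x_{k-1}^\alpha - \beta_k w_k^\alpha$ to split it as
\begin{align*}
w_{k+1} = \big(g(x_k) - g(x_{k-1}^\alpha)\big) + \big(g(x_{k-1}^\alpha) - x_{k-1}^\alpha\big) - \beta_k w_k^\alpha .
\end{align*}
The first group measures how far the accepted step moved from the averaged point, the second is the true residual evaluated at the averaged point, and the third is the damped averaged update that was actually applied.

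For the first group I would use $\|g'(x)\|_Y \le C_0$ (Assumption \ref{assume:g}.1) and the fundamental theorem of calculus along the segment from $x_{k-1}^\alpha$ to $x_k$, giving $\|g(x_k) - g(x_{k-1}^\alpha)\|_Y \le C_0\beta_k\|w_k^\alpha\|_Y = C_0\beta_k\theta_k\|w_k\|_Y$ by the definition of the gain factor $\|w_k^\alpha\|_Y = \theta_k\|w_k\|_Y$. For the second group I would exploit the affine structure of the optimization: since the coefficients defining $x_{k-1}^\alpha$ and $w_k^\alpha$ agree and sum to one, and each $w_j = g(x_{j-1}) - x_{j-1}$, one has the exact identity $\sum_j\alpha_j g(x_{j-1}) = w_k^\alpha + x_{k-1}^\alpha$. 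Writing $g$ of the average minus the average of $g$ as a remainder then yields
\begin{align*}
g(x_{k-1}^\alpha) - x_{k-1}^\alpha = w_k^\alpha + R, \qquad R := g(x_{k-1}^\alpha) - \sum_j\alpha_j g(x_{j-1}),
\end{align*}
where $R$ is a pure nonlinearity (Taylor) remainder that vanishes when $g$ is affine. Substituting, the $w_k^\alpha$ terms combine into $(1-\beta_k)w_k^\alpha$, and together with the first group this produces exactly the leading factor $\theta_k\big((1-\beta_k) + C_0\beta_k\big)\|w_k\|_Y$.

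The crux, and the main obstacle, is to show that $\|R\|_Y$ contributes precisely the second-order term $\tfrac{C C_1\sqrt{1-\theta_k^2}}{2}(\cdots)$ in \eqref{eqn:genm}. I would expand $R$ with a second-order Taylor remainder about $x_{k-1}$ and apply the Lipschitz bound $\|g'(x)-g'(y)\|_Y \le C_1\|x-y\|_Y$ (Assumption \ref{assume:g}.2), controlling it by $\tfrac{C_1}{2}$ times a weighted sum of products of iterate differences $\|x_{i-1}-x_{j-1}\|_Y$. Two reductions then convert this into the stated form. First, each consecutive difference is bounded by its residual: from the update, $e_j = x_j - x_{j-1} = -E_j\gamma^j + \beta_j w_j^\alpha$, and the direction-sine hypothesis (lower bound $c_s$ on the columns of $F_j$) bounds the least-squares coefficients $\gamma^j$, giving $\|e_j\|_Y \le \|w_j\|_Y\, h(\theta_j)$ with $h(\theta_j)\le C\sqrt{1-\theta_j^2} + \beta_j\theta_j$; the two parts of $h$ reflect the averaging correction ($\sqrt{1-\theta_j^2}$) and the damped update ($\beta_j\theta_j$). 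Second, telescoping each difference $x_{i-1}-x_{j-1}$ into a sum of the intermediate $e_n$ generates the combinatorial weights $(k-n)$ and the boundary term $m_k\|w_{k-m_k}\|_Y h(\theta_{k-m_k})$, while the overall prefactor $\sqrt{1-\theta_k^2}$ arises from bounding the coefficient vector $\gamma^k$ itself — again via $c_s$ — by a multiple of $\sqrt{1-\theta_k^2}\,\|w_k\|_Y$.

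I would organize this as three lemmas: (i) the conditioning/gain relation controlling $\|E_k\gamma^k\|_Y$ and $\|\gamma^k\|$ by $\sqrt{1-\theta_k^2}\,\|w_k\|_Y$ under the sine condition; (ii) the per-step bound $\|e_j\|_Y \le \|w_j\|_Y h(\theta_j)$; and (iii) the Taylor-remainder estimate for $R$. The genuinely delicate accounting is (iii) combined with (i): keeping the constant $C$ dependent only on $c_s$, and tracking the gain factors sharply enough that $R$ carries the full $\sqrt{1-\theta_k^2}$ prefactor rather than an $O(1)$ one — this is precisely what makes the bound reflect acceleration when $\theta_k$ is small. Assembling the first-order terms with this second-order estimate gives \eqref{eqn:genm}. (This result is established in \cite{PR21}; the above is how I would reconstruct it.)
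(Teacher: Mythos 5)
The paper does not prove this theorem; it is quoted verbatim from the cited reference \cite{PR21}, so the only ``proof'' here is the citation. Your reconstruction follows essentially the same route as the argument in \cite{PR21}: the decomposition of $w_{k+1}$ via the averaged iterate $x_{k-1}^\alpha$, the exact identity $\sum_j \alpha_j^k g(x_{j-1}) = x_{k-1}^\alpha + w_k^\alpha$ yielding the leading factor $\theta_k\big((1-\beta_k)+C_0\beta_k\big)\|w_k\|$, the per-step bound $\|e_j\|_Y \le h(\theta_j)\|w_j\|_Y$ from the direction-sine conditioning of $F_j$, and the telescoped second-order (Lipschitz-derivative) remainder producing the weights $(k-n)$ and the $\sqrt{1-\theta_k^2}$ prefactor --- so the sketch is consistent with the source.
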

	In this estimate, $\theta_k$ is the gain from the optimization problem, and it determines the relative scalings of the contributions from the lower and higher order terms. 
The lower order terms are multiplied by $\theta_k$, and the higher-order terms are 
multiplied by $\sqrt{1-\theta_k^2}.$ 
{\color{black} While this bound does not guarantee global convergence, it does establish
how AA improves the first order term at the cost of adding higher-order terms to the
residual expansion at each step. For contractive problems, this additionally shows
local convergence with an improved rate in comparison to the original fixed-point 
iteration. If close to the root (so higher order terms are negligible), it shows AA 
will improve the the convergence rate by the scaling factor $\theta_k$, which can change 
at each step.}  
	
	\section{Acceleration of the regularized Bingham Picard iteration}\label{picardsection}
	In this section, we present some properties of the Picard iteration to solve (\ref{regeqn}) and its associated fixed point function. 
	We then use these properties to apply convergence and acceleration theory
	for AA to this iteration.
	Given $\bu^0\in\bV_h$, for $k=1,2,...,$ find $\bu^{k}\in\bV_h$ such that
	\begin{equation}\label{picardbinghamV}
		\begin{aligned}
			2\mu (D \bu^{k}, D\bv ) 
			+
			\tau_s
			\left(
			\frac{D\bu^{k}}{|D\bu^{k-1}|_{\varepsilon}},D\bv
			\right)
			&=
			(\blf,\bv),\quad \forall\bv\in\bV_h.
		\end{aligned}
	\end{equation}
	
	The convergence analysis of this Picard iteration for  (\ref{regeqn}) is given in \cite{AHOV11}.
	\subsection{Solution operator $G$ corresponding to the Picard iteration}
	In this subsection, we study some properties of the solution operator of the linearized problem of the form of (\ref{picardbinghamV}).
	
	Let $\blf\in \bH^{-1}(\Omega)$ and $\bu\in\bV_h$ be given. Consider the problem of finding $\tilde{\bu}\in\bV_h$ such that
	\begin{equation}\label{picardbinghamG}
		\begin{aligned}
			2\mu ( D\tilde{\bu}, D\bv ) 
			+
			\tau_s
			\left(
			\frac{D\tilde{\bu}}{|D\bu|_{\varepsilon}},D\bv
			\right)
			&=
			(\blf,\bv),\quad \forall\bv\in\bV_h.
		\end{aligned}
	\end{equation}
	In continuous case, well-posedness and convergence analysis of the solution of Picard iteration of (\ref{regeqn}) is presented in \cite{AHOV11}. In discrete setting, well-posedness can be proven by following these same steps.
	\begin{lemma}\label{welltildeu}
		For  $\blf\in \bH^{-1}(\Omega)$ and $\bu\in\bV_h$, (\ref{picardbinghamG}) is well-posed and the solution satisfies the bound
		\begin{align}\label{apriorib}
			\| \nabla\tilde{\bu} \|
			\leq
			\mu^{-1}\|\blf\|_{-1}.
		\end{align}
	\end{lemma}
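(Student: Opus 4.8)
The plan is to exploit the fact that, since $\bu\in\bV_h$ is \emph{fixed}, the weight $|D\bu|_{\eps}$ is a fixed strictly positive function, so \eqref{picardbinghamG} is a \emph{linear} problem for $\tilde{\bu}$ rather than a genuinely nonlinear one. I would therefore introduce the bilinear form
$$
A(\tilde{\bu},\bv) := 2\mu(D\tilde{\bu},D\bv) + \taus\left(\frac{D\tilde{\bu}}{|D\bu|_{\eps}},D\bv\right)
$$
on $\bV_h$ equipped with the norm $\|\nabla\cdot\|$, and verify the hypotheses of the Lax--Milgram theorem. This is simpler than the Browder--Minty argument needed for the fully nonlinear problem \eqref{discdivfreeprob}, and only requires continuity and coercivity of $A$ together with boundedness of the load functional.

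For coercivity, note that $\taus\ge 0$ and $|D\bu|_{\eps}\ge\eps>0$ pointwise, so the second term of $A(\bv,\bv)$ equals $\taus\int_{\Omega}|D\bv|^2/|D\bu|_{\eps}\ge 0$. Discarding it and applying Korn's inequality $\|\nabla\bv\|\le\sqrt{2}\|D\bv\|$ yields $A(\bv,\bv)\ge 2\mu\|D\bv\|^2\ge\mu\|\nabla\bv\|^2$. For continuity I would invoke the $L^\infty$ bound $\||D\bu|^{-1}_{\eps}\|_{L^\infty(\Omega)}\le\eps^{-1}$ already recorded in the excerpt, together with Cauchy--Schwarz and Korn, to get $|A(\tilde{\bu},\bv)|\le(2\mu+\taus\eps^{-1})\|\nabla\tilde{\bu}\|\|\nabla\bv\|$. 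Since $\blf\in\bH^{-1}(\Omega)$, the map $\bv\mapsto(\blf,\bv)$ is bounded on $\bV_h$ with $|(\blf,\bv)|\le\|\blf\|_{-1}\|\nabla\bv\|$, so Lax--Milgram delivers a unique $\tilde{\bu}\in\bV_h$.

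For the a priori bound \eqref{apriorib}, I would take $\bv=\tilde{\bu}$ in \eqref{picardbinghamG}. Dropping the nonnegative term $\taus\int_\Omega|D\tilde{\bu}|^2/|D\bu|_{\eps}$ from the left and bounding the right by duality gives $2\mu\|D\tilde{\bu}\|^2\le\|\blf\|_{-1}\|\nabla\tilde{\bu}\|$; since Korn's inequality gives $\|D\tilde{\bu}\|^2\ge\tfrac12\|\nabla\tilde{\bu}\|^2$, this becomes $\mu\|\nabla\tilde{\bu}\|^2\le\|\blf\|_{-1}\|\nabla\tilde{\bu}\|$, whence $\|\nabla\tilde{\bu}\|\le\mu^{-1}\|\blf\|_{-1}$.

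I do not anticipate a serious obstacle, since the problem is linear and the shear term is sign-definite. The only points requiring care are keeping the Korn constants consistent so that the coercivity constant matches the $\mu^{-1}$ claimed in \eqref{apriorib}, and noting that although the continuity constant $2\mu+\taus\eps^{-1}$ degrades as $\eps\to 0$, this affects only the conditioning and not the well-posedness or the $\eps$-independent stability estimate, which relies solely on the $2\mu\|D\tilde{\bu}\|^2$ term.
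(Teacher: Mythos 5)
Your proof is correct, and the stability estimate at its heart is the same as the paper's: test with $\bv=\tilde{\bu}$, discard the nonnegative weighted term $\taus\||D\bu|_{\eps}^{-1/2}D\tilde{\bu}\|^2$, and use Korn's inequality together with the duality pairing to obtain $\mu\|\nabla\tilde{\bu}\|^2\le\|\blf\|_{-1}\|\nabla\tilde{\bu}\|$. Where you diverge is in how well-posedness is concluded. The paper does not invoke Lax--Milgram at all: it observes that the a priori bound forces the homogeneous problem to have only the zero solution, hence uniqueness, and then uses the fact that \eqref{picardbinghamG} is a linear system on the finite-dimensional space $\bV_h$ so that uniqueness implies existence. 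That argument never needs a continuity bound for the bilinear form, and in particular never touches the $\eps^{-1}$ that appears in your continuity constant $2\mu+\taus\eps^{-1}$. Your Lax--Milgram route costs you that extra (harmless, as you correctly note) $\eps$-dependent estimate, but buys generality: it would establish well-posedness verbatim on the infinite-dimensional space $\bV$, where the paper's ``uniqueness implies existence'' shortcut is unavailable. Both arguments are complete; for the discrete setting the paper's is marginally leaner, and your observation that the $\eps$-degeneracy enters only the conditioning and not the stability bound is exactly the right point to flag.
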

	\begin{proof}
		Assume a solution exists, and choose $\bv=\tilde{\bu}\in\bV_h$. Then, using the dual norm on $\bV_h$, we get
		\begin{align*}
			\mu \| \nabla\tilde{\bu} \|^2
			\leq
			2\mu \| D\tilde{\bu} \|^2
			\leq
			2\mu \| D\tilde{\bu} \|^2
			+\tau_s\| |D\bu|_{\varepsilon}^{-1/2}  D\tilde{\bu}\|^2
			=
			(\blf,\tilde{\bu})
			\leq
			\|\blf\|_{-1} \|\nabla\tilde{\bu}\|,
		\end{align*}
		which shows (\ref{apriorib}). This bound is 
		sufficient to imply uniqueness since the system is linear, and since it is also finite dimensional, existence follows from uniqueness.
	\end{proof}
	\begin{definition}\label{defineG}
		Define $G:\bV_h\rightarrow \bV_h$ to be the solution operator of (\ref{picardbinghamG}). That is,
		$$\tilde{\bu}=G(\bu).$$
	\end{definition}
	By Lemma \ref{welltildeu}, (\ref{picardbinghamG}) is well-posed, so $G$ is well defined. Thus, the iteration (\ref{picardbinghamV}) can now be written as
	$$\bu^{k+1}=G(\bu^{k}).$$
	\subsection{Lipschitz continuity and differentiability of G}
	In this subsection, we prove properties of $G$ which are used to show convergence of the AA Picard iteration for (\ref{discdivfreeprob}) via Theorem \ref{thm:genm}.
	First, to prove that $ G $ satisfies the first part of Assumptions \ref{assume:g}, we show that $G$ is Lipschitz continuous, $G'$ exists and is the Fr\'echet derivative of $G$. Then, by showing $G$ is Lipschitz continuously differentiable, we prove that $ G $ satisfies the second part of Assumptions \ref{assume:g}. 
	The satisfaction of both properties allows us to establish convergence of the AA Picard 
	iteration for (\ref{picardbinghamV}).
	We begin with Lipschitz continuity of $G$.
	\begin{lemma}\label{LipschitzG}
		For any $\bu, \bw\in \bV_h$, we have
		\begin{align}\label{CG}
			\| DG(\bu)-DG(\bw)\|
			\leq
			C_G
			\|D\bw-D\bu\|,
		\end{align}	
		where $C_G=\left(
		\frac{\tau_s
			\eps^{-3}
			C(1+|\ln h|) h^{-2} \mu^{-3} \|\blf\|_{-1}^2 }{8}
		\right)^{1/2}$ in 2D, and $C_G=	\left(
		\frac{\tau_s
			\eps^{-3}
			C h^{-3} \mu^{-3} \|\blf\|_{-1}^2 }{8}\right)^{1/2}$ in 3D.
	\end{lemma}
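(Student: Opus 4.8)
The plan is to bound $G(\bu)-G(\bw)$ in the $D$-seminorm by testing the two defining equations against their difference and absorbing the monotone part of the nonlinearity. Write $\tilde{\bu}=G(\bu)$, $\tilde{\bw}=G(\bw)$, and set $\bphi=\tilde{\bu}-\tilde{\bw}\in\bV_h$. Subtracting the two instances of \eqref{picardbinghamG} and testing with $\bv=\bphi$ gives
\begin{align*}
2\mu\|D\bphi\|^2 + \taus\left(\frac{D\tilde{\bu}}{|D\bu|_\eps}-\frac{D\tilde{\bw}}{|D\bw|_\eps},D\bphi\right)=0.
\end{align*}
Adding and subtracting $\frac{D\tilde{\bw}}{|D\bu|_\eps}$ in the nonlinear inner product splits it into a nonnegative weighted term (coming from $D\bphi/|D\bu|_\eps$) and a remainder involving only the reciprocal difference, so that
\begin{align*}
2\mu\|D\bphi\|^2 + \taus\||D\bu|_\eps^{-1/2}D\bphi\|^2 = -\taus\left(\left(\tfrac{1}{|D\bu|_\eps}-\tfrac{1}{|D\bw|_\eps}\right)D\tilde{\bw},D\bphi\right).
\end{align*}

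Next I would estimate the right-hand side. Since $x\mapsto\sqrt{x^2+\eps^2}$ is $1$-Lipschitz (equivalently, by the algebraic identity used in the proof of Theorem \ref{convthm}), $\left|\tfrac{1}{|D\bu|_\eps}-\tfrac{1}{|D\bw|_\eps}\right|\le \tfrac{|D(\bu-\bw)|}{|D\bu|_\eps|D\bw|_\eps}$. The delicate point is to recover the $\eps^{-3/2}$ scaling rather than the crude $\eps^{-2}$: I would keep one factor $|D\bu|_\eps^{-1/2}$ attached to $D\bphi$ so it can be paired with the weighted seminorm already present on the left. Writing $|D\bphi|=|D\bu|_\eps^{1/2}\cdot|D\bu|_\eps^{-1/2}|D\bphi|$, applying H\"older's inequality ($L^\infty$--$L^2$--$L^2$) with $\|D\tilde{\bw}\|_{L^\infty}$ pulled out, and using $|D\bu|_\eps,|D\bw|_\eps\ge\eps$, bounds the right-hand side by $\taus\eps^{-3/2}\|D\tilde{\bw}\|_{L^\infty}\|D(\bu-\bw)\|\,\||D\bu|_\eps^{-1/2}D\bphi\|$. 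A weighted Young's inequality then absorbs $\tfrac{\taus}{2}\||D\bu|_\eps^{-1/2}D\bphi\|^2$ into the left-hand side, leaving
\begin{align*}
2\mu\|D\bphi\|^2 \le \tfrac{\taus}{2}\eps^{-3}\|D\tilde{\bw}\|_{L^\infty}^2\|D(\bu-\bw)\|^2.
\end{align*}

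It remains to control $\|D\tilde{\bw}\|_{L^\infty}$, and this is where the mesh dependence and the split between the 2D and 3D constants arise. Using $|D\bv|\le|\nabla\bv|$ pointwise I would bound $\|D\tilde{\bw}\|_{L^\infty}\le\|\nabla\tilde{\bw}\|_{L^\infty}$, then combine an inverse inequality (to pass from the $L^\infty$ norm of $\tilde{\bw}$ to that of $\nabla\tilde{\bw}$) with the discrete Sobolev inequality \eqref{discsobolevineq} in 2D, or \eqref{discsobolevineq3D} in 3D, to obtain $\|\nabla\tilde{\bw}\|_{L^\infty}\le C(1+|\ln h|)^{1/2}h^{-1}\|\nabla\tilde{\bw}\|$ in 2D and $\le Ch^{-3/2}\|\nabla\tilde{\bw}\|$ in 3D. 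Finally the a priori bound $\|\nabla\tilde{\bw}\|\le\mu^{-1}\|\blf\|_{-1}$ from Lemma \ref{welltildeu} converts this into the stated dependence on $\mu^{-1}\|\blf\|_{-1}$. Taking square roots and collecting the constants yields \eqref{CG} with the claimed $C_G$, the numerical constant being absorbed into $C$.

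I expect the main obstacle to be the second step, namely obtaining the sharp $\eps^{-3/2}$ factor rather than $\eps^{-2}$: this requires resisting the temptation to bound $|D\bu|_\eps^{-1}|D\bw|_\eps^{-1}$ by $\eps^{-2}$ outright, and instead carefully retaining the $|D\bu|_\eps^{-1/2}$-weighted seminorm coming from the monotone part so that exactly one of the two factors of $\eps^{-1}$ is hidden inside a term that is later absorbed. The $L^\infty$ estimate of $\nabla\tilde{\bw}$ is routine but is precisely what makes $C_G$ grow as $h\to 0$, so that $G$ is Lipschitz but not uniformly in the mesh width.
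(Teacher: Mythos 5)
Your proposal is correct and follows essentially the same route as the paper's proof: the same energy identity obtained by adding and subtracting $D\tilde{\bw}/|D\bu|_{\eps}$ in the nonlinear term, the same retention of the $|D\bu|_{\eps}^{-1/2}$-weighted seminorm to secure the $\eps^{-3/2}$ (rather than $\eps^{-2}$) factor before absorbing it by Young's inequality, and the same inverse-plus-discrete-Sobolev-plus-a-priori chain to bound $\|DG(\bw)\|_{L^{\infty}(\Omega)}$ in 2D and 3D. The only discrepancies are immaterial numerical constants in the Young's inequality bookkeeping, which are absorbed into the generic $C$.
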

	{\color{black}
	\begin{remark}\label{rem1}
	This constant is quite large, but it holds globally and also we make no assumptions on the data (i.e. no assumption that $\mu$ is large).  In terms of representing a contraction number, we believe this to be a pessimistic bound.  While negative scalings with $h$ and $\epsilon$ are observed in our tests and in \cite{AHOV11}, the negative scalings 
	are much milder than these.  It appears to be an open problem to show the existence of a region (i.e. $u$ close enough to $w$) where $C_G<1$, without excessive restrictions on the data and mesh size.
	\end{remark}
	}

	\begin{proof}
		Let $\bu,\bw\in \bV_h$ and $G(\bu)=\tilde{\bu}$ and $G(\bw)=\tilde{\bw}$. Then,
		\begin{equation}\label{upsoln}
			\begin{aligned}
				2\mu ( DG(\bu), D\bv ) 
				+
				\tau_s
				\left(
				\frac{DG(\bu)}{|D\bu|_{\varepsilon}},D\bv
				\right)
				&=
				(\blf,\bv).
			\end{aligned}
		\end{equation}
		\begin{equation}\label{wzsoln}
			\begin{aligned}
				2\mu ( DG(\bw), D\bv ) 
				+
				\tau_s
				\left(
				\frac{DG(\bw)}{|D\bw|_{\varepsilon}},D\bv
				\right)
				&=
				(\blf,\bv).
			\end{aligned}
		\end{equation}
		Subtracting (\ref{wzsoln}) from (\ref{upsoln}), then adding and subtracting $\frac{DG(\bw)}{|D\bu|_{\varepsilon}}$ from the first argument in the second term, we get
		\begin{equation}
			\begin{aligned}
				2\mu ( DG(\bu)-DG(\bw), D\bv ) 
				+
				\taus
				\left(
				\frac{1}{|D\bu|_{\varepsilon}}
				\left(
				DG(\bu)-DG(\bw)
				\right)
				,D\bv
				\right)
				\\+
				\taus
				\left(
				\left(
				\frac{1}{|D\bu|_{\varepsilon}}
				-\frac{1}{|D\bw|_{\varepsilon}}
				\right)
				DG(\bw),D\bv
				\right)
				=0.
			\end{aligned}
		\end{equation}
		Choosing $\bv= G(\bu)-G(\bw)$ gives
		\begin{equation}
			\begin{aligned}
				2\mu \| DG(\bu)-DG(\bw)\|^2 
				+
				\tau_s
				\|\
				|D\bu|_{\varepsilon}^{-1/2}
				(
				DG(\bu)-DG(\bw))
				\|^2
				\\
				=
				-
				\tau_s
				\left(
				\left(
				\frac{1}{|D\bu|_{\varepsilon}}
				-\frac{1}{|D\bw|_{\varepsilon}}
				\right)
				DG(\bw),DG(\bu)-DG(\bw)
				\right),
			\end{aligned}
		\end{equation}
		and then using reverse triangle and H\"older's inequalities, noting that $\||D\bu|_{\eps}^{-1}\|_{L^{\infty}(\Omega)}\leq \eps^{-1}$, exploiting discrete Sobolev and inverse inequalities, (\ref{apriorib}) and Young's inequality, we obtain in 2D that
		\begin{align*}
			&\left|
			-
			\tau_s
			\left(
			\left(
			\frac{1}{|D\bu|_{\varepsilon}}
			-\frac{1}{|D\bw|_{\varepsilon}}
			\right)
			DG(\bw),DG(\bu)-DG(\bw)
			\right)
			\right|
			\\
			&\,\,\,\,\,\,\leq
			\tau_s
			\int_{\Omega}
			\frac{\left|D\bw-D\bu\right|}{|D\bu|_{\varepsilon}|D\bw|_{\varepsilon}}
			|DG(\bw)|\ 
			|DG(\bu)-DG(\bw)|
			\\
			&\,\,\,\,\,\,\leq
			\tau_s
			\eps^{-3/2}
			C(1+|\ln h|)^{1/2} h^{-1} \mu^{-1} \|\blf\|_{-1}
			\|D\bw-D\bu\|\ 
			\|\ |D\bu|_{\varepsilon}^{-1/2} (DG(\bu)-DG(\bw))\|
			\\
			&\,\,\,\,\,\,\leq
			\frac{\tau_s
				\eps^{-3}
				C(1+|\ln h|) h^{-2} \mu^{-2} \|\blf\|_{-1}^2 }{4}
			\|D\bw-D\bu\|^2
			+
			\taus 
			\|\ |D\bu|_{\varepsilon}^{-1/2} (DG(\bu)-DG(\bw))\|^2.
		\end{align*}
		So, combining the bound for left hand side term and dividing each side by $2\mu$ give
		\begin{align*}
			\| DG(\bu)-DG(\bw)\|^2 
			\leq
			\frac{\tau_s
				\eps^{-3}
				C(1+|\ln h|) h^{-2} \mu^{-3} \|\blf\|_{-1}^2 }{8}
			\|D\bw-D\bu\|^2.
		\end{align*}
		Then, by taking the square roots of both sides, we get (\ref{CG}). For the 3D case, we use inverse inequality (\ref{discsobolevineq3D}) instead of \eqref{discsobolevineq} to obtain the result.
	\end{proof}
	
	Next, we show that G is Lipschitz Fr\'echet differentiable. We begin by defining the operator $G',$ and then show it is the Fr\'echet derivative operator of G.
	\begin{definition}\label{defineG'}
		Given $\bu\in\bV_h$, define an operator $G'(\bu;\cdot):\bV_h\rightarrow\bV_h$ by $G'(\bu;\bh)$ satisfying for all $\bh\in\bV_h$,
		\begin{equation}\label{diffG}
			\begin{aligned}
				2\mu ( DG'(\bu;\bh), D\bv ) 
				+\tau_s\left(
				\frac{DG'(\bu;\bh)}{|D\bu|_{\eps}},D\bv
				\right)
				=\tau_s\left(
				\frac{D\bu:D\bh}{|D\bu|^3_{\eps}}DG(\bu),D\bv
				\right).
			\end{aligned}
		\end{equation}
	\end{definition}
	
	Now, we need to show $G'$ is the Jacobian matrix of $G$ at $\bu$ by the following lemma. To show this, first we need to prove $G'$ in definition (\ref{defineG'}) is well defined.
	\begin{lemma} 
		\label{lemma:wellG'}
		The operator $G'$ in Definition (\ref{defineG'}) is well-defined for all $\bu,\bh\in\bV_h$ such that
		\begin{align}\label{wellG'}
			\| DG'(\bu;\bh)\|
			\leq
			C_G \|D\bh\|,
		\end{align}
		where $C_G=C_{G_{2D}}$ on $\Omega\subset\mathbb{R}^2$, and $C_G=C_{G_{3D}}$ on $\Omega\subset\mathbb{R}^3$.
	\end{lemma}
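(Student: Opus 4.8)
The plan is to treat \eqref{diffG} as a linear problem for the unknown $G'(\bu;\bh)$ and argue exactly as in Lemmas \ref{welltildeu} and \ref{LipschitzG}. The bilinear form on the left of \eqref{diffG}, namely $(\bw,\bv)\mapsto 2\mu(D\bw,D\bv)+\taus(|D\bu|_{\eps}^{-1}D\bw,D\bv)$, is coercive on $\bV_h$ since $2\mu(D\bw,D\bw)+\taus\||D\bu|_{\eps}^{-1/2}D\bw\|^2\ge 2\mu\|D\bw\|^2\ge\mu\|\nabla\bw\|^2$, and it is bounded; for fixed $\bu,\bh$ the right-hand side is a bounded linear functional of $\bv$. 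Since $\bV_h$ is finite dimensional, coercivity gives uniqueness and hence existence, so $G'(\bu;\bh)$ is well defined. The estimate \eqref{wellG'} will then come from the a priori bound obtained by testing with $\bv=G'(\bu;\bh)$.

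Choosing $\bv=G'(\bu;\bh)$ in \eqref{diffG} makes the left-hand side equal to $2\mu\|DG'(\bu;\bh)\|^2+\taus\||D\bu|_{\eps}^{-1/2}DG'(\bu;\bh)\|^2$, which is bounded below by $2\mu\|DG'(\bu;\bh)\|^2$. For the right-hand side I would first use $|D\bu:D\bh|\le|D\bu||D\bh|$ together with $|D\bu|\le|D\bu|_{\eps}$ to obtain the pointwise bound $\frac{|D\bu:D\bh|}{|D\bu|_{\eps}^3}\le\frac{|D\bh|}{|D\bu|_{\eps}^2}$, so that
\[
\left|\taus\left(\frac{D\bu:D\bh}{|D\bu|_{\eps}^3}DG(\bu),DG'(\bu;\bh)\right)\right|
\le \taus\int_{\Omega}\frac{|D\bh|}{|D\bu|_{\eps}^2}\,|DG(\bu)|\,|DG'(\bu;\bh)|.
\]
Then I would pull $|DG(\bu)|$ out in $L^{\infty}$ via the discrete Sobolev inequality \eqref{discsobolevineq} (in 2D) or \eqref{discsobolevineq3D} (in 3D), followed by the inverse inequality \eqref{invineq} and the a priori bound \eqref{apriorib} from Lemma \ref{welltildeu}, giving $\|DG(\bu)\|_{L^{\infty}(\Omega)}\le C(1+|\ln h|)^{1/2}h^{-1}\mu^{-1}\|\blf\|_{-1}$ in 2D (and the analogous $h^{-3/2}$-type bound in 3D). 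The crucial manipulation is to split $|D\bu|_{\eps}^{-2}=|D\bu|_{\eps}^{-3/2}\cdot|D\bu|_{\eps}^{-1/2}$ and use $|D\bu|_{\eps}^{-3/2}\le\eps^{-3/2}$, so that the remaining $|D\bu|_{\eps}^{-1/2}$ pairs with $|DG'(\bu;\bh)|$ to reproduce the weighted norm $\||D\bu|_{\eps}^{-1/2}DG'(\bu;\bh)\|$ that already appears on the left; Cauchy--Schwarz then leaves $\|D\bh\|$ times that weighted norm.

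With this in place the estimate is identical in structure to the one in Lemma \ref{LipschitzG}: after Young's inequality the $\taus\||D\bu|_{\eps}^{-1/2}DG'(\bu;\bh)\|^2$ term is absorbed by the corresponding term on the left, and dividing by $2\mu$ yields $\|DG'(\bu;\bh)\|^2\le C_G^2\|D\bh\|^2$ with exactly the constant $C_G$ of Lemma \ref{LipschitzG} in each dimension; taking square roots gives \eqref{wellG'}. I expect the only delicate point to be the bookkeeping of the powers of $\eps$ and $h$ so that the constant matches $C_G$ --- in particular, recognizing that the factor $|D\bu|_{\eps}^{-3}$ in the definition of $G'$, combined with $|D\bu|\le|D\bu|_{\eps}$, contributes the same $\eps^{-3/2}$ (hence $\eps^{-3}$ after squaring) as the product $|D\bu|_{\eps}^{-1}|D\bw|_{\eps}^{-1}$ did in Lemma \ref{LipschitzG}, rather than a different power.
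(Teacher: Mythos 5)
Your proposal is correct and follows exactly the route the paper intends: the paper's own proof of Lemma \ref{lemma:wellG'} simply states that one repeats the steps of Lemma \ref{LipschitzG} and invokes linearity plus finite dimensionality for well-posedness, and your argument fills in precisely those steps, with the pointwise bound $|D\bu:D\bh|\,|D\bu|_{\eps}^{-3}\le |D\bh|\,|D\bu|_{\eps}^{-2}$ and the split $|D\bu|_{\eps}^{-2}=|D\bu|_{\eps}^{-3/2}|D\bu|_{\eps}^{-1/2}$ reproducing the same $\eps^{-3/2}$ factor, and hence the same constant $C_G$, as in Lemma \ref{LipschitzG}.
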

	\begin{proof}
		The proof of Lemma \ref{lemma:wellG'} can be done by following same steps 
		as in
		the proof of Lemma \ref{LipschitzG}. Since (\ref{diffG}) is linear and finite dimensional, (\ref{wellG'}) is sufficient to say that the (\ref{diffG}) is well-posed. Thus, $G'$ is well-defined and uniformly bounded over $\bV_h$, since the bound is independent of $\bu$.
	\end{proof}
	Next, we show that $G'$ is the Fr\'echet derivative operator of $G$. That is, given $\bu\in\bV_h$, there exists some constant $\mathcal{F}$ such that for any $\bh\in\bV_h$
	\begin{align*}
		\|D(G(\bu+\bh)-G(\bu)-G'(\bu;\bh))\| \leq \mathcal{F}\|D\bh\|^2.
	\end{align*}

	\begin{lemma}\label{lemma:frechetGlemma}
		For arbitrary $\bu\in \bV_h$ and sufficiently small $\bh\in \bV_h$, the bound
		\begin{align}\label{frechetGlemma}
			\| DG(\bu+\bh)-DG(\bu)-DG'(\bu;\bh)\|
			\leq
			\left(
			\taus
			C(1+|\ln h|) h^{-2} \eps^{-3}
			\left(
			C_G^2
			+
			\eps^{-2}
			\mu^{-2} \|\blf\|^2_{-1}
			\right)\right)^{1/2}
			\|D\bh\|^2
		\end{align}
		holds, which implies $G$ is Fr\'echet differentiable on $\bV_h$.
	\end{lemma}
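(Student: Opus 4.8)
The plan is to derive and test the residual equation for $\be:=G(\bu+\bh)-G(\bu)-G'(\bu;\bh)$, paralleling the proof of Lemma~\ref{LipschitzG}. First I would write the defining relations \eqref{picardbinghamG} for $G(\bu+\bh)$ and $G(\bu)$ together with the definition \eqref{diffG} of $G'(\bu;\bh)$. Using the pointwise identity $\frac{DG(\bu+\bh)}{|D(\bu+\bh)|_\eps}=\frac{DG(\bu+\bh)}{|D\bu|_\eps}+DG(\bu+\bh)\,\delta$, with $\delta:=|D(\bu+\bh)|_\eps^{-1}-|D\bu|_\eps^{-1}$, the $G(\bu+\bh)$ equation can be recast against the single weight $|D\bu|_\eps^{-1}$; subtracting the $G(\bu)$ and $G'(\bu;\bh)$ equations then gives, for all $\bv\in\bV_h$,
\[
2\mu(D\be,D\bv)+\tau_s\Big(\tfrac{D\be}{|D\bu|_\eps},D\bv\Big)
=-\tau_s\big(DG(\bu+\bh)\,\delta,D\bv\big)-\tau_s\Big(\tfrac{D\bu:D\bh}{|D\bu|_\eps^{3}}DG(\bu),D\bv\Big).
\]
The left-hand side is exactly the coercive form used throughout Section~\ref{picardsection}.

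Next I would massage the right-hand side into a manifestly quadratic remainder. Splitting $DG(\bu+\bh)=DG(\bu)+D\big(G(\bu+\bh)-G(\bu)\big)$ regroups the two terms as
\[
-\tau_s\Big(DG(\bu)\big(\delta+\tfrac{D\bu:D\bh}{|D\bu|_\eps^{3}}\big),D\bv\Big)
-\tau_s\big(D(G(\bu+\bh)-G(\bu))\,\delta,D\bv\big).
\]
The key observation is that $\delta+\frac{D\bu:D\bh}{|D\bu|_\eps^{3}}=R$ is precisely the second-order Taylor remainder of $s\mapsto|D(\bu+s\bh)|_\eps^{-1}$ about $s=0$ evaluated at $s=1$: the first-order term $-\frac{D\bu:D\bh}{|D\bu|_\eps^{3}}$ is exactly the linearization placed into \eqref{diffG}, and it cancels against $\delta$, leaving only $R$. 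Differentiating along the segment $\bu+s\bh$ and using the Lagrange form of the remainder yields the pointwise bound $|R|\le C\eps^{-3}|D\bh|^{2}$, which is where the $\eps^{-3}$ scaling enters and which confirms \eqref{diffG} is the correct derivative. The reverse triangle inequality for $|\cdot|_\eps$ likewise gives $|\delta|\le|D(\bu+\bh)|_\eps^{-1}|D\bu|_\eps^{-1}|D\bh|\le\eps^{-2}|D\bh|$.

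I would then choose $\bv=\be$, so coercivity bounds the left-hand side below by $2\mu\|D\be\|^{2}+\tau_s\||D\bu|_\eps^{-1/2}D\be\|^{2}$. For the remainder term $-\tau_s(DG(\bu)R,D\be)$ I would insert $|R|\le C\eps^{-3}|D\bh|^{2}$, place $DG(\bu)$ in $L^\infty$ via the inverse inequality \eqref{invineq} and the discrete Sobolev inequality \eqref{discsobolevineq} (in 3D, \eqref{discsobolevineq3D}), and apply the a~priori bound \eqref{apriorib}, $\|\nabla G(\bu)\|\le\mu^{-1}\|\blf\|_{-1}$; this produces the $\eps^{-2}\mu^{-2}\|\blf\|_{-1}^{2}$ contribution together with the $h^{-2}(1+|\ln h|)$ growth. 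For the cross term $-\tau_s(D(G(\bu+\bh)-G(\bu))\,\delta,D\be)$ I would use $|\delta|\le\eps^{-2}|D\bh|$, the Lipschitz bound of Lemma~\ref{LipschitzG}, $\|D(G(\bu+\bh)-G(\bu))\|\le C_G\|D\bh\|$, and again \eqref{invineq}--\eqref{discsobolevineq} for the $L^\infty$ factor; this produces the $C_G^{2}$ contribution. Routing both estimates through a weighted Young inequality (so the $\tau_s\||D\bu|_\eps^{-1/2}D\be\|^{2}$ pieces are absorbed on the left) and dividing by $2\mu$, exactly as in Lemma~\ref{LipschitzG}, assembles \eqref{frechetGlemma}. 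Since the right-hand side is $O(\|D\bh\|^{2})$ with constant independent of $\bh$, we get $\|D\be\|/\|D\bh\|\to0$ as $\|D\bh\|\to0$, so $G'(\bu;\cdot)$ is the Fr\'echet derivative.

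The main obstacle is the second-order remainder step: one must verify the exact cancellation $\delta+\frac{D\bu:D\bh}{|D\bu|_\eps^{3}}=R$ and the uniform pointwise bound $|R|\le C\eps^{-3}|D\bh|^{2}$ along the segment joining $\bu$ and $\bu+\bh$, since this simultaneously identifies \eqref{diffG} as the correct linearization and fixes the $\eps^{-3}$ dependence. A secondary difficulty, shared with Lemma~\ref{LipschitzG}, is the $L^\infty$ control of the discrete strain fields $DG(\bu)$ and $D\bh$; this is where the smallness hypothesis on $\bh$ (in the spirit of the conditions of Section~\ref{femsection}) and the dimension-dependent discrete Sobolev and inverse estimates are used, and it is responsible for the $h^{-2}(1+|\ln h|)$ factor in 2D (and the analogous $h^{-3}$ factor in 3D) carried in the constant.
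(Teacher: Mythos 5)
Your argument is essentially the paper's own proof: test the error equation for $G(\bu+\bh)-G(\bu)-G'(\bu;\bh)$ with itself, split the right-hand side into a cross term controlled by the Lipschitz bound of Lemma \ref{LipschitzG} (giving the $C_G^2$ contribution) and a second-order Taylor remainder of $s\mapsto|D(\bu+s\bh)|_\eps^{-1}$ controlled pointwise by $C\eps^{-3}|D\bh|^2$ (giving the $\eps^{-2}\mu^{-2}\|\blf\|_{-1}^2$ contribution), then use the discrete Sobolev/inverse inequalities, the a priori bound \eqref{apriorib}, and a weighted Young inequality absorbing $\taus\||D\bu|_\eps^{-1/2}D\tilde{\bg}\|^2$ on the left. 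The only deviation is a cosmetic regrouping (you attach the remainder $R$ to $DG(\bu)$ and the factor $\delta$ to the Lipschitz difference, while the paper attaches $\frac{D\bu:D\bh}{|D\bu|_\eps^3}$ to the Lipschitz difference and $R$ to $DG(\bu+\bh)$), which changes nothing in the estimates; your signs in the Taylor expansion are in fact the correct ones.
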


	\begin{proof}
		Set  $\tilde{\bg}= G(\bu+\bh)-G(\bu)-G'(\bu;\bh)$ for notational ease. To construct the left hand side of the inequality above, we begin with the following equations: for any $\bu,\bh\in\bV_h$,
		\begin{equation}\label{uhsoln}
			\begin{aligned}
				2\mu ( DG(\bu+\bh), D\bv ) 
				+\taus\left(
				\frac{DG(\bu+\bh)}{|D(\bu+\bh)|_{\eps}},D\bv
				\right)
				&=
				(\blf,\bv).
			\end{aligned}
		\end{equation}
		Subtracting (\ref{upsoln}) and (\ref{diffG}) from (\ref{uhsoln}), we obtain
		\begin{align}\label{eqn:2term}
			2\mu ( D\tilde{\bg}, D\bv ) 
			+\taus\left(
			\frac{DG(\bu+\bh)}{|D(\bu+\bh)|_{\eps}}
			-
			\frac{DG(\bu)}{|D\bu|_{\varepsilon}}
			-
			\frac{DG'(\bu;\bh)}{|D\bu|_{\eps}}
			-
			\frac{D\bu:D\bh}{|D\bu|^3_{\eps}}DG(\bu)
			,D\bv
			\right)
			=
			0.
		\end{align}
		Adding and subtracting $\frac{DG(\bu+\bh)}{|D\bu|_{\eps}}$ from the first argument in the second term on the left hand side of \eqref{eqn:2term}
		and then choosing $\bv=\tilde{\bg}$ gives
		\begin{align*}
			2\mu \| D\tilde{\bg}\|^2 
			+\taus
			\||D(\bu)|_{\eps}^{-1/2} D\tilde{\bg}\|^2
			=
			-
			\taus\left(
			\frac{DG(\bu+\bh)}{|D(\bu+\bh)|_{\eps}}
			-
			\frac{DG(\bu+\bh)}{|D\bu|_{\eps}}
			-
			\frac{D\bu:D\bh}{|D\bu|^3_{\eps}}DG(\bu)
			,
			D\tilde{\bg}
			\right).
		\end{align*}
		Adding and subtracting $\frac{D\bu:D\bh}{|D\bu|^3_{\eps}}DG(\bu+\bh)$ from the first argument of the term on right hand side of \eqref{eqn:2term} and rearranging terms gives
		\begin{align}
			2\mu \| D\tilde{\bg}\|^2 
			+\taus
			\||D(\bu)|_{\eps}^{-1/2} D\tilde{\bg}\|^2
			=
			&-
			\taus\left(
			\frac{D\bu:D\bh}{|D\bu|^3_{\eps}}
			\left(
			DG(\bu+\bh)
			-
			DG(\bu)
			\right)
			,
			D\tilde{\bg}
			\right) \nonumber
			\\&-
			\taus\left(
			\left(
			\frac{1}{|D(\bu+\bh)|_{\eps}}
			-
			\frac{1}{|D\bu|_{\eps}}
			-
			\frac{D\bu:D\bh}{|D\bu|^3_{\eps}}
			\right)
			DG(\bu+\bh)
			,
			D\tilde{\bg}
			\right). \label{GFD}
		\end{align}
		
		We now estimate the right hand side terms of \eqref{GFD}.  For the first one, we use Lemma \ref{LipschitzG}, that  $\| |D\bu|^{-1}_{\eps}\|_{L^{\infty}(\Omega)}\leq\eps^{-1}$ and $\||D\bu|_{\eps}^{-1} D(\bu)\|_{L^{\infty}(\Omega)}\leq 1$, and H\"older's, discrete Sobolev, inverse and Young's inequalities to obtain
		\begin{align*}
			&\left|
			-
			\taus\left(
			\frac{D\bu:D\bh}{|D\bu|^3_{\eps}}
			\left(
			DG(\bu+\bh)
			-
			DG(\bu)
			\right)
			,
			D\tilde{\bg}
			\right)
			\right|
			\\&\leq
			\taus
			\int_{\Omega} 
			\frac{|D\bu| |D\bh|}{|D\bu|^3_{\eps}}
			\left|
			DG(\bu+\bh)
			-
			DG(\bu)
			\right|
			|D\tilde{\bg}|
			\\
			&\leq
			\taus
			\eps^{-3/2}
			\| DG(\bu+\bh)-DG(\bu)\|_{L^{\infty}(\Omega)}
			\|D\bh\|
			||D\bu|^{-1/2}_{\eps}D\tilde{\bg}|
			\\
			&\leq
			\taus \eps^{-3/2}
			C(1+|\ln h|)^{1/2} h^{-1}
			\|DG(\bu+\bh)-DG(\bu)\|
			\|D\bh\|
			||D\bu|^{-1/2}_{\eps}D\tilde{\bg}|
			\\
			&\leq
			\taus \eps^{-3/2}
			C(1+|\ln h|)^{1/2} h^{-1}
			C_G
			\|D\bh\|^2
			||D\bu|^{-1/2}_{\eps}D\tilde{\bg}\|
			\\
			&\leq
			\frac{\taus}{2} \eps^{-3}
			C(1+|\ln h|) h^{-2}
			C_G^2
			\|D\bh\|^4
			+
			\frac{\taus}{2}
			||D\bu|^{-1/2}_{\eps}D\tilde{\bg}\|^2.
		\end{align*}
		
		For the second term in \eqref{GFD}, we proceed similar to the first term but utilize the Taylor expansion
		\begin{align*}
			\frac{1}{|D(\bu+\bh)|_{\eps}}
			=
			\frac{1}{|D\bu|_{\eps}}
			+
			\frac{D\bu:D\bh}{|D\bu|^3_{\eps}}
			+
			\frac{1}{2}
			\left(
			\frac{1}{|D\bu|^3_{\eps}}
			+
			3\frac{D\bu:D\bu}{|D\bu|^5_{\eps}}
			\right)
			|D\bh|^2
			+
			{\color{black}\textit{higher-order terms},}
		\end{align*}
		to get 
		\begin{align*}
			&\left|
			-
			\taus\left(
			\left(
			\frac{1}{|D(\bu+\bh)|_{\eps}}
			-
			\frac{1}{|D\bu|_{\eps}}
			-
			\frac{D\bu:D\bh}{|D\bu|^3_{\eps}}
			\right)
			DG(\bu+\bh)
			,
			D\tilde{\bg}
			\right)
			\right|
			\\&\leq
			\taus
			\int_{\Omega}
			\left|
			\frac{1}{|D(\bu+\bh)|_{\eps}}
			-
			\frac{1}{|D\bu|_{\eps}}
			-
			\frac{D\bu:D\bh}{|D\bu|^3_{\eps}}
			\right|
			|DG(\bu+\bh)|
			|D\tilde{\bg}|
			\\
			&\leq
			\taus
			\int_{\Omega}
			\frac{3}{4}
			\left|
			\frac{1}{|D\bu|_{\eps}^3}
			+
			\frac{3|D\bu|^2}{|D\bu|_{\eps}^5}
			\right|
			|D\bh|^2
			|DG(\bu+\bh)|
			|D\tilde{\bg}| + {\color{black}\textit{higher-order terms}} 
			\\
			&\leq
			2\eps^{-5/2}
			C(1+|\ln h|)^{1/2} h^{-1} \mu^{-1} \|\blf\|_{-1}
			\|D\bh\|^2
			\||D\bu|_{\eps}^{-1/2} D\tilde{\bg} \|+	{\color{black}\textit{higher-order terms}}
			\\
			&\leq
			2\taus\eps^{-5}
			C(1+|\ln h|) h^{-2} \mu^{-2} \|\blf\|^2_{-1}
			\|D\bh\|^4
			+
			\frac{\taus}{2}
			\||D\bu|_{\eps}^{-1/2} D\tilde{\bg} \|^2.
		\end{align*}
		In the third line in the above inequality string we account for higher order terms by increasing the $\frac12$ coefficient
		from the Taylor expansion to be $\frac34$, {\color{black} since the higher-order terms are higher order in {\bf h} which we can consider arbitrarily small in this context, while the mesh
		and $\epsilon$ are considered fixed.}
		
		Combining the bounds above, we obtain
		\begin{align*}
			\| D\tilde{\bg}\|^2 
			\leq
			\taus
			C(1+|\ln h|) h^{-2} \eps^{-3}
			\left(
			C_G^2
			+
			\eps^{-2}
			\mu^{-2} \|\blf\|^2_{-1}
			\right)
			\|D\bh\|^4.
		\end{align*}
		So, by taking the square roots of both sides and applying the definition of $\tilde{\bg}$, we get
		\begin{align}\label{frechetG}
			\| DG(\bu+\bh)-DG(\bu)-DG'(\bu;\bh)\|
			\leq
			\left(
			\taus
			C(1+|\ln h|) h^{-2} \eps^{-3}
			\left(
			C_G^2
			+
			\eps^{-2}
			\mu^{-2} \|\blf\|^2_{-1}
			\right)\right)^{1/2}
			\|D\bh\|^2,
		\end{align}
		which shows  Fr\'echet differentiability of $G$ at $\bu$. Since (\ref{frechetG}) holds for arbitrary $\bu$, $G$ is Fr\'echet differentiable on $\bV_h$. In the case of  $\Omega\subset\mathbb{R}^3$, we apply inverse inequality (\ref{discsobolevineq3D}) instead of \eqref{discsobolevineq}, and the rest of the steps are identical.
	\end{proof}
	We now show $G'$ is Lipschitz continuous over $\bV_h$.
	\begin{lemma}\label{lemma:LipschitzG'}
		$G$ is Lipschitz continuously differentiable on $\bV_h$, such that for all $\bu,\bs,\bh\in\bV_h$
		\begin{align*}
			\| D\left( G'(\bu+\bh;\bs)-G'(\bu;\bs) \right)\|
			\leq
			\hat{C}_G
			\|D\bs\|
			\|D\bh\|,
		\end{align*}	
		where there exist a constant 
		$\hat{C}_G
		=C
		\mu^{-1}
		\tau_s
		\eps^{-2}
		(1+|\ln h|)^{1/2} h^{-1}
		\left(
		C_G 
		+
		\eps^{-1}
		(1+|\ln h|)^{1/2} h^{-1} \mu^{-1} \|\blf\|_{-1}
		\right)$ in 2D
		and
		$\hat{C}_G
		=C
		\mu^{-1}
		\tau_s
		\eps^{-2}
		h^{-3/2}
		\left(
		C_G 
		+
		\eps^{-1}
		h^{-3/2} \mu^{-1} \|\blf\|_{-1}
		\right)$ in 3D, and $C_G$ is defined in Lemma (\ref{LipschitzG}).
	\end{lemma}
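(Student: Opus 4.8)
The plan is to mimic the argument of Lemma~\ref{LipschitzG}: write \eqref{diffG} once with the pair $(\bu+\bh,\bs)$ and once with $(\bu,\bs)$ in place of $(\bu,\bh)$, subtract the two identities, test with the difference, and use coercivity of the left-hand side to control $\|D\bphi\|$, where $\bphi := G'(\bu+\bh;\bs)-G'(\bu;\bs)$. After subtracting, in the viscosity term on the left I would add and subtract $\frac{DG'(\bu+\bh;\bs)}{|D\bu|_\eps}$. Testing with $\bv=\bphi$ then isolates $2\mu\|D\bphi\|^2 + \taus\||D\bu|^{-1/2}_\eps D\bphi\|^2$ on the left, while moving to the right the term $T_1 := -\taus\left(\left(\frac{1}{|D(\bu+\bh)|_\eps}-\frac{1}{|D\bu|_\eps}\right)DG'(\bu+\bh;\bs),D\bphi\right)$ together with the difference of the two right-hand sides $T_2 := \taus\left(\frac{D(\bu+\bh):D\bs}{|D(\bu+\bh)|^3_\eps}DG(\bu+\bh)-\frac{D\bu:D\bs}{|D\bu|^3_\eps}DG(\bu),D\bphi\right)$. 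Since the left-hand side is bounded below by $2\mu\|D\bphi\|^2$, it suffices to bound $|T_1|+|T_2|$ by a constant times $\|D\bs\|\|D\bh\|\|D\bphi\|$ and divide through by $2\mu\|D\bphi\|$; this is precisely what produces the overall factor $\mu^{-1}$ in $\hat{C}_G$.

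The term $T_1$ is handled exactly as in Lemma~\ref{LipschitzG}: the reverse triangle inequality gives $\left|\frac{1}{|D(\bu+\bh)|_\eps}-\frac{1}{|D\bu|_\eps}\right|\le \frac{|D\bh|}{|D(\bu+\bh)|_\eps|D\bu|_\eps}\le \eps^{-2}|D\bh|$, and then H\"older ($L^\infty$--$L^2$--$L^2$) placing $DG'(\bu+\bh;\bs)$ in $L^\infty$, the discrete Sobolev and inverse inequalities, and the bound \eqref{wellG'} of Lemma~\ref{lemma:wellG'} (so that $\|DG'(\bu+\bh;\bs)\|_{L^\infty}\le C(1+|\ln h|)^{1/2}h^{-1}C_G\|D\bs\|$) yield a contribution of order $\tau_s\eps^{-2}C(1+|\ln h|)^{1/2}h^{-1}C_G\|D\bs\|\|D\bh\|\|D\bphi\|$.

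For $T_2$ I would telescope, writing the integrand difference as $\frac{D(\bu+\bh):D\bs}{|D(\bu+\bh)|^3_\eps}\left(DG(\bu+\bh)-DG(\bu)\right) + \left(\frac{D(\bu+\bh):D\bs}{|D(\bu+\bh)|^3_\eps}-\frac{D\bu:D\bs}{|D\bu|^3_\eps}\right)DG(\bu)$, and further splitting the scalar coefficient difference as $\frac{D\bh:D\bs}{|D(\bu+\bh)|^3_\eps} + D\bu:D\bs\left(\frac{1}{|D(\bu+\bh)|^3_\eps}-\frac{1}{|D\bu|^3_\eps}\right)$. The first piece uses $\left|\frac{D(\bu+\bh):D\bs}{|D(\bu+\bh)|^3_\eps}\right|\le\eps^{-2}|D\bs|$ with Lemma~\ref{LipschitzG} and the discrete Sobolev/inverse estimates applied to $DG(\bu+\bh)-DG(\bu)$, giving a term of the same order as $T_1$. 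The two remaining pieces are bounded pointwise by $C\eps^{-3}|D\bh||D\bs|$: for the cubic-reciprocal difference I would factor $\frac{1}{a^3}-\frac1{b^3}=\frac{(b-a)(a^2+ab+b^2)}{a^3b^3}$ with $a=|D(\bu+\bh)|_\eps$, $b=|D\bu|_\eps$, use $|b-a|\le|D\bh|$ and $|D\bu:D\bs|\le |D\bu|_\eps|D\bs|$, and note $(a^2+ab+b^2)/(a^3b^2)\le 3\eps^{-3}$; then $DG(\bu)$ is bounded in $L^\infty$ via \eqref{apriorib} and the discrete Sobolev/inverse inequalities, which costs a second factor $(1+|\ln h|)^{1/2}h^{-1}$ and yields $\tau_s\eps^{-3}C(1+|\ln h|)h^{-2}\mu^{-1}\|\blf\|_{-1}\|D\bs\|\|D\bh\|\|D\bphi\|$.

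Collecting all contributions, dividing by $2\mu\|D\bphi\|$, and absorbing numerical constants into $C$ gives exactly the stated $\hat{C}_G$; the 3D case is identical except that \eqref{discsobolevineq3D} replaces \eqref{discsobolevineq}, turning each $(1+|\ln h|)^{1/2}h^{-1}$ into $h^{-3/2}$. I expect the main obstacle to be the bookkeeping in $T_2$: one must arrange the telescoping so that every factor destined for $L^\infty$ is a finite-element function controlled by an a priori bound (\eqref{apriorib} or \eqref{wellG'}) or by the Lipschitz estimate of Lemma~\ref{LipschitzG}, while correctly tracking the powers of $\eps$ through the cubic denominator.
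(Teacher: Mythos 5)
Your proposal is correct and follows essentially the same route as the paper's proof: subtract the two defining identities \eqref{G'} and \eqref{G'+h}, test with the difference $\bphi$, add and subtract a cross term in the viscosity contribution so the left-hand side is coercive, and estimate the two resulting right-hand-side groups with the pointwise bounds $\||D\cdot|_\eps^{-1}\|_{L^\infty}\le\eps^{-1}$, the discrete Sobolev/inverse inequalities, \eqref{apriorib}, \eqref{wellG'}, and Lemma \ref{LipschitzG}, arriving at the same $\hat{C}_G$ (your choice of which reciprocal weight to add and subtract is the mirror image of the paper's, which is immaterial). In fact your exact telescoping of the coefficient $\frac{D(\bu+\bh):D\bs}{|D(\bu+\bh)|^3_\eps}-\frac{D\bu:D\bs}{|D\bu|^3_\eps}$, including the $\frac{1}{a^3}-\frac{1}{b^3}$ piece bounded by $3\eps^{-3}|D\bh|$, is slightly more careful than the paper's displayed splitting, but that extra piece enters at the same order $\tau_s\eps^{-3}(1+|\ln h|)h^{-2}\mu^{-1}\|\blf\|_{-1}$, so the stated constant is unaffected.
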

	\begin{proof}
		By the definition of $G'$, the following equations hold
		\begin{align}\label{G'}
			2\mu ( DG'(\bu;\bs), D\bv ) 
			+\tau_s\left(
			\frac{DG'(\bu;\bs)}{|D\bu|_{\eps}},D\bv
			\right)
			+\tau_s\left(
			\frac{D\bu:D\bs}{|D\bu|^3_{\eps}}DG(\bu),D\bv
			\right)
			&=
			0,
		\end{align}
		\begin{align}\label{G'+h}
			2\mu ( DG'(\bu+\bh;\bs), D\bv ) 
			+\tau_s\left(
			\frac{DG'(\bu+\bh;\bs)}{|D(\bu+\bh)|_{\eps}},D\bv
			\right)
			+\tau_s\left(
			\frac{D(\bu+\bh):D\bs}{|D(\bu+\bh)|^3_{\eps}}DG(\bu+\bh),D\bv
			\right)
			&=
			0,
		\end{align}
		for all $\bu,\bs,\bh,\bv\in\bV_h$. 
		
		Set $\be=G'(\bu+\bh;\bs)-G'(\bu;\bs),$ and then by subtracting (\ref{G'}) from (\ref{G'+h}), we get
		\begin{align*}
			2\mu ( D\be, D\bv ) 
			+\tau_s
			\left(
			\frac{DG'(\bu+\bh;\bs)}{|D(\bu+\bh)|_{\eps}}-\frac{DG'(\bu;\bs)}{|D(\bu)|_{\eps}},D\bv
			\right)
			\\+\tau_s\left(
			\frac{D(\bu+\bh):D\bs}{|D(\bu+\bh)|^3_{\eps}}DG(\bu+\bh)
			-
			\frac{D\bu:D\bs}{|D(\bu)|^3_{\eps}}DG(\bu)
			,D\bv
			\right)
			=
			0.
		\end{align*}
		By adding and subtracting $\frac{DG'(\bu;\bs)}{|D(\bu+\bh)|_{\eps}}$ from the first argument in second term on left hand side and choosing $\bu=\be$, we obtain
		\begin{align*}
			2\mu \| D\be\|^2
			&\leq
			2\mu \| D\be\|^2
			+\tau_s
			\| |D(\bu+\bh)|_{\eps}^{-1/2}  D\be\|^2
			\\
			&=
			-\tau_s\left(
			\frac{D(\bu+\bh):D\bs}{|D(\bu+\bh)|^3_{\eps}}DG(\bu+\bh)
			-
			\frac{D\bu:D\bs}{|D(\bu)|^3_{\eps}}DG(\bu)
			,D\be
			\right)
			\\&
			-
			\tau_s
			\left(
			\left(\frac{1}{|D(\bu+\bh)|_{\eps}}-\frac{1}{|D(\bu)|_{\eps}}\right)DG'(\bu;\bs),D\be
			\right).
		\end{align*}
		Noting that $||D\bu|_{\eps}D\bu|\leq1$ and $\||D\bu|_{\eps}^{-1}\|_{L^{\infty}(\Omega)}\leq \eps^{-1}$, applying H\"older's, discrete Sobolev and inverse inequalities, (\ref{apriorib}) and (\ref{CG}), we get
		\begin{align*}
			&\left|-\tau_s\left(
			\frac{D(\bu+\bh):D\bs}{|D(\bu+\bh)|^3_{\eps}}DG(\bu+\bh)
			-
			\frac{D\bu:D\bs}{|D(\bu)|^3_{\eps}}DG(\bu)
			,D\be
			\right)\right|
			\\
			&\leq
			\tau_s
			\int_{\Omega}
			\frac{D\bu:D\bs}{|D(\bu)|^3_{\eps}}DG(\bu+\bh)D\be
			+
			\frac{D\bh:D\bs}{|D(\bu+\bh)|^3_{\eps}}DG(\bu+\bh)D\be
			-
			\frac{D\bu:D\bs}{|D(\bu)|^3_{\eps}}DG(\bu)
			D\be
			\\
			&\leq
			\tau_s
			\int_{\Omega}
			\frac{D\bu:D\bs}{|D(\bu)|^3_{\eps}}(DG(\bu+\bh)-DG(\bu))D\be
			+
			\frac{D\bh:D\bs}{|D(\bu+\bh)|^3_{\eps}}DG(\bu+\bh)D\be
			\\
			&\leq
			\tau_s
			\eps^{-2}
			\|D\bs\|_{L^{\infty}(\Omega)} \|DG(\bu+\bh)-DG(\bu)\| \|D\be\|
			+
			\taus
			\eps^{-3}
			\|D\bh\|_{L^{\infty}(\Omega)} \|D\bs\|_{L^{\infty}(\Omega)} \|DG(\bu+\bh)\| \|D\be\|
			\\
			&\leq
			\tau_s
			\eps^{-2}
			C(1+|\ln h|)^{1/2} h^{-1} C_G  \|D\bs\| \|D\bh\| \|D\be\|
			+
			\taus
			\eps^{-3}
			C(1+|\ln h|) h^{-2} \mu^{-1} \|\blf\|_{-1}
			\|D\bh\| \|D\bs\| \|D\be\|.
		\end{align*}
		Using reverse triangle and H\"older's inequalities, noting that $\||D\bu|_{\eps}^{-1}\|_{L^{\infty}(\Omega)}\leq \eps^{-1}$, exploiting discrete Sobolev and inverse inequalities, we obtain
		\begin{align*}
			&\left|\ 
			-
			\tau_s
			\left(
			\left(\frac{1}{|D(\bu+\bh)|_{\eps}}-\frac{1}{|D\bu|_{\eps}}\right)DG'(\bu;\bs),D\be
			\right)
			\right|
			\\
			&\leq
			\tau_s
			\int_{\Omega}
			\left(\frac{|D\bh|}{|D(\bu)|_{\eps}|D(\bu+\bh)|_{\eps}}\right)DG'(\bu;\bs),D\be
			\\
			&\leq
			\tau_s
			\eps^{-2}
			\|D\bh\|_{L^{\infty}(\Omega)}\|DG'(\bu;\bs)\| \|D\be\|
			\\
			&\leq
			\tau_s
			\eps^{-2}
			C_G C(1+|\ln h|)^{1/2} h^{-1}
			\|D\bh\| \|D\bs\| \|D\be\|.
		\end{align*}
		By combining the above bounds, we get
		\begin{align*}
			\| D\be\|
			\leq
			C
			\mu^{-1}
			\tau_s
			\eps^{-2}
			(1+|\ln h|)^{1/2} h^{-1}
			\left(
			C_G 
			+
			\eps^{-1}
			(1+|\ln h|)^{1/2} h^{-1} \mu^{-1} \|\blf\|_{-1}
			\right)
			\|D\bh\| \|D\bs\| 
			=
			\hat{C}_G
			\|D\bh\| \|D\bs\| .
		\end{align*}
		In this way, $G'(\bu;\cdot)$ is Lipschitz continuous with constant $\hat{C}_G$. Since the bound holds for arbitrary $\bu$, we have that $ G $ is Lipschitz continuously differentiable on $\bV_h$ with constant $\hat{C}_G$.
	\end{proof}
	\subsection{Anderson Accelerated Picard algorithm for regularized Bingham Equations \eqref{regeqn}}
	In previous subsection, we proved that the solution operator $G$ of Picard iteration \eqref{picardbinghamV} of regularized Bingham equation satisfies 
	Assumption \ref{assume:g}. 
	To apply the one-step residual bound of \cite{PR21}, 
	 we further require satisfaction of
	Assumption \ref{assume:fg}; namely, there a constant $ \sigma >0 $ such that for any $\bu,\bs\in\bV$
	\begin{align}\label{F}
		\|F(\bu)-F(\bs)\|\geq \sigma \|\bu-\bs\|,
	\end{align}  
	where $F(\bu):=G(\bu)-\bu$.
	
	
	%
	As discussed in subsection \ref{aasection}, condition \eqref{F} can be monitored and
	enforced by a 
	safeguarding strategy for a given $\bar \sigma > 0$, although as shown in section 
	\ref{sec:numerics}, it was not necessary to do so here.
	Under these assumptions and with Lemmas \ref{LipschitzG}, \ref{lemma:wellG'}, \ref{lemma:LipschitzG'} and \ref{frechetGlemma}, Theorem \ref{thm:genm} shows the convergence of Algorithm \ref{alg:anderson} where $G$ is the solution operator of the Picard iteration for regularized Bingham equation.
	\begin{theorem}\label{thm:AAPicB}
		
		Suppose \eqref{F} holds for some $\sigma>0$, 
		and suppose the direction sines between each column $i$ of $F_j$ defined by 
		\eqref{Fk1} and the subspace spanned by the preceeding columns satisfy
		$|\sin(f_{j,i},\text{span }\{f_{j,1}, \ldots, f_{j,i-1}\})| \ge c_s >0$,
		for $j = k-m_k, \ldots, k-1$.
		Then, for any step $k>m$ 
	the following bound holds for the AA Picard residual
	\begin{align}
		\nr{w_{k+1}} & \le \nr{w_k} \Bigg(
		\theta_k (1-\beta_{k} + C_G \beta_{k})
		+ \frac{C \hat{C}_G\sqrt{1-\theta_k^2}}{2}\bigg(
		\nr{w_{k}}h(\theta_{k})
		\nonumber \\ &
		+ 2  \sum_{n = k-{m_{k}}+1}^{k-1} (k-n)\nr{w_n}h(\theta_n) 
		+ m_{k}\nr{w_{k-m_{k}}}h(\theta_{k-m_{k}})
		\bigg) \Bigg),
	\end{align}
	for residual $w_k$, where $\theta_{k}$ is the gain from the optimization problem.
\end{theorem}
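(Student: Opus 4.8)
The plan is to recognize that this theorem is a direct application of the general one-step residual bound of Theorem \ref{thm:genm} to the specific fixed point operator $G$ from Definition \ref{defineG}, taking the normed space $Y = \bV_h$ equipped with the norm $\|D\cdot\|$ (equivalent to $\|\nabla\cdot\|$ by Korn's inequality). The whole task thus reduces to verifying that $G$ satisfies the hypotheses of that theorem—Assumptions \ref{assume:g} and \ref{assume:fg}—and then identifying the abstract constants $C_0$ and $C_1$ appearing in \eqref{eqn:genm} with the concrete constants $C_G$ and $\hat{C}_G$ already computed in the preceding lemmas.

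First I would establish Assumption \ref{assume:g}. Part (1), the uniform bound $\|G'(\bu)\|_{\bV_h}\le C_0$, is immediate from Lemma \ref{lemma:wellG'}, which gives $\|DG'(\bu;\bh)\|\le C_G\|D\bh\|$ with a constant independent of $\bu$; hence we take $C_0 = C_G$. Part (2), the Lipschitz continuity of the derivative $\|G'(\bu)-G'(\bw)\|_{\bV_h}\le C_1\|\bu-\bw\|$, is exactly the content of Lemma \ref{lemma:LipschitzG'}, so $C_1 = \hat{C}_G$. The remaining requirement that $G\in C^1(\bV_h)$ follows by combining Lemma \ref{lemma:frechetGlemma}, which shows $G'$ is the genuine Fr\'echet derivative of $G$, with the Lipschitz bound of Lemma \ref{lemma:LipschitzG'}; continuity of the derivative is a consequence of its Lipschitz property. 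Existence of a fixed point $\bu^\ast$ is supplied by the well-posedness of the discrete problem \eqref{discdivfreeprob}.

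Next I would verify Assumption \ref{assume:fg}, which is precisely the hypothesis \eqref{F} posited in the statement: writing $F(\bu)=G(\bu)-\bu$ and $w_k=F(x_{k-1})$, the lower bound $\|F(\bu)-F(\bs)\|\ge\sigma\|\bu-\bs\|$ translates directly into $\|w_{k+1}-w_k\|_{\bV_h}\ge\sigma\|x_k-x_{k-1}\|_{\bV_h}$. As recorded in Remark \ref{remark:contractive}, this holds automatically with $\sigma=1-C_G$ whenever $G$ is contractive. With both assumptions in hand I would invoke Theorem \ref{thm:genm} verbatim, substituting $C_0\to C_G$ and $C_1\to\hat{C}_G$ into \eqref{eqn:genm} and carrying over the direction-sine hypothesis on the columns of $F_j$ unchanged, which yields the stated residual bound with no further computation.

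Since the heavy lifting—the Lipschitz, differentiability, and Lipschitz-derivative estimates—was already done in Lemmas \ref{LipschitzG}--\ref{lemma:LipschitzG'}, there is no genuine analytic obstacle remaining; the step needing the most care is bookkeeping. I would ensure the abstract norm $\|\cdot\|_Y$ is consistently read as $\|D\cdot\|$ on $\bV_h$ so that the constants from the earlier lemmas slot correctly into the abstract estimate, and I would confirm that $C^1$ regularity does in fact follow from Fr\'echet differentiability together with Lipschitz continuity of the derivative, so that Theorem \ref{thm:genm} applies without qualification.
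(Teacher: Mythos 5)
Your proposal matches the paper's argument exactly: the paper likewise verifies Assumption \ref{assume:g} via Lemmas \ref{LipschitzG}, \ref{lemma:wellG'}, \ref{lemma:frechetGlemma} and \ref{lemma:LipschitzG'} (identifying $C_0$ with $C_G$ and $C_1$ with $\hat{C}_G$), takes Assumption \ref{assume:fg} from the hypothesis \eqref{F}, and then invokes Theorem \ref{thm:genm} directly. No gaps; the bookkeeping of reading $\|\cdot\|_Y$ as $\|D\cdot\|$ on $\bV_h$ is exactly what the paper intends.
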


\begin{remark}The direction sine condition in the hyptheses of Theorem
	\ref{thm:AAPicB} can be directly enforced by the method described in 
	\cite[Section 5.1]{PR21}.
\end{remark}

{\color{black}
\begin{remark} While $C_G$ is not proven above to be less than 1 (see Remark \ref{rem1}),
numerical tests below and in \cite{AHOV11} suggests this is typically the case, at least 
when near the solution.  So even though Theorem \ref{thm:AAPicB} does not guarantee 
global convergence, it does show that AA reduces the first order term in term in
the residual expansion in comparison to the corresponding fixed-point iteration, 
specifically $(1-\beta_{k} + C_G \beta_{k})$ to $\theta_k (1-\beta_{k} + C_G \beta_{k})$, where $\theta_k$ is the gain of the step $k$ optimization problem.  Thus, in practice, a good initial guess is expected to both keep $C_G$ small and make the higher terms negligible, which in turn makes AA improve convergence. 
\end{remark}
}

\section{Numerical Experiments}\label{sec:numerics}
This section presents the results of three numerical tests that illustrate the theory above for the Anderson Accelerated Picard iteration for regularized Bingham equations. First, we show the predicted convergence rate of the finite element discretization, and the positive impact of AA on convergence by the flow between two parallel plates, which is one of the few analytical test cases for the Bingham equations.  Then, we test Anderson Accelerated Picard iteration for regularized Bingham fluid flow on 2D and 3D driven cavity problems. Our results are in good agreement with those found in \cite{MZ01, O09}. In all numerical tests, AA provides significantly faster convergence than Picard without AA, especially with small $\eps$.  {\color{black} For all of our tests, we use $\bu_0={\bf 0}$ in the interior but also satisfying the boundary conditions of the problem.  We would expect somewhat better convergence if a better $\bu_0$ were chosen, e.g. the solution of the analogous problem with similar $\epsilon$, such as in a a continuation method.  However, with AA, initial guesses that are sufficiently bad may not perform well since the analysis suggests the higher order terms in the residual may prevent convergence.}

\subsection{Analytical test}
The flow between two parallel plates is one of the known analytical test cases for Bingham problem. In two dimensions, the analytical solutions of Stokes type Bingham equations are given by
\begin{align}\label{analiyticalsoln}
	u_1=\left\{
	\begin{array}{ll}
		\frac{1}{8} \left[(1-2\taus)^2 - (1-2\taus-2y)^2\right], & 0\leq y <\frac{1}{2}-\taus\\
		\frac{1}{8} (1-2\taus)^2 , & \frac{1}{2}-\taus\leq y \leq\frac{1}{2}+\taus\\
		\frac{1}{8} \left[(1-2\taus)^2 - (2y-2\taus-1)^2\right], & \frac{1}{2}+\taus < y \leq 1\\
	\end{array} 
	\right.,\
	u_2=0,\
	\text{and}\
	p=0.
\end{align}
The rigid (or plug) region $\{y\in\Omega\ |\frac{1}{2}-\taus\leq y \leq\frac{1}{2}+\taus\}$ is the kernel moving at constant velocity. We choose $\taus=0.3$. The discretization uses $(P_2,P_1)$ Taylor-Hood elements on a uniform triangular mesh. We take $\mu=1$ and external force $\blf=\textbf{0}$, and perform Anderson accelerated Picard iterations with depth $m=0\ \text{(no acceleration)}, 1, 2, 5$ and $10$, and will test both convergence rates for \eqref{fem} and efficiency of AA Picard solver. {\color{black} The initial guess is $\bu_0={\bf 0}$ except satisfying Dirichlet boundary conditions defined by the true analytical solution in \eqref{analiyticalsoln}.}

We display the number of iterations that reduce the relative residual of the velocity by $10^{-8}$ for varying depths $m$, mesh sizes $h$ and regularization parameters $\eps$ in Table \ref{table:iter}. When $\eps\rightarrow 0$ and/or the mesh width decreases, the required number of iterations increases, as we expect from our analysis in the previous section. Also, AA provides better convergence results as we increase the depth. This improvement is more apparent in lower values of $\eps$, which is required to obtain an accurate solution. In the case of $m=0$ (without AA), the numbers of iterations are 
substantially higher; however, with AA they decrease significantly.
The fastest convergence is obtained with depths $m=5$ and $10$, as seen in 
Table \ref{table:iter}.

\begin{table}[H]
	\centering
	\small\addtolength{\tabcolsep}{-5pt}
	\begin{tabular}{clccccc|lccccc}
		\hline
		$\downarrow$ h $\varepsilon\rightarrow$& & $10^{-1}$ &  $10^{-2}$ & $10^{-3}$  & $10^{-4}$    & $10^{-5}$  	&  & $10^{-1}$ &  $10^{-2}$ & $10^{-3}$  & $10^{-4}$    & $10^{-5}$ \\
		\hline
		&$m=0$&    &    &     &     &     &$m=1$&   &    &    &    & \\
		1/8   &     & 10 & 34 & 42  & 88  & 101 &     & 8 & 22 & 24 & 31 & 43 \\
		1/16  &     & 10 & 34 & 80  & 171 & 274 &     & 9 & 21 & 36 & 60 & 94 \\
		1/32  &     & 11 & 27 & 98  & 258 & 296 &     & 9 & 21 & 45 & 95 & 114\\
		1/64  &     & 11 & 35 & 108 & 184 & 184 &     & 9 & 21 & 46 & 57 & 49\\
		1/128 &     & 11 & 35 & 106 & 306 & 408 &     & 9 & 21 & 43 & 71 & 69  \\
		\hline
		&$m=5$&   &    &    &    &    &$m=10$&   &    &    &    & \\ 
		1/8  &     & 7 & 14 & 16 & 22 & 26 &      & 7 & 14 & 16 & 21 & 25 \\
		1/16 &     & 7 & 15 & 24 & 30 & 33 &      & 7 & 15 & 23 & 28 & 33 \\
		1/32 &     & 8 & 16 & 29 & 36 & 41 &      & 8 & 15 & 25 & 32 & 34 \\
		1/64 &     & 8 & 16 & 29 & 34 & 40 &      & 8 & 15 & 26 & 34 & 39 \\      
		1/128&     & 8 & 16 & 29 & 48 & 50 &      & 8 & 15 & 26 & 39 & 51\\       
		\hline
	\end{tabular}\caption{Number of Anderson accelerated Picard iterations required for reducing the residual by $10^{-8}$ with different depths for regularized Bingham equation when $\taus=0.3$ and  $h$ and $\eps$ is changing in analytical test case}\label{table:iter}
\end{table}
%

\subsection{2D driven cavity}
We next test Anderson accelerated Picard iteration for the regularized Bingham equations on a lid-driven cavity problem. The domain for the problem is the unit square $\Omega=(0,1)^2$ and we impose Dirichlet boundary conditions by $ \bu |_{y=1}=(1,0)^T $ and $\bu=\textbf{0}$ everywhere else. The discretization uses $(P_2,P_1)$ Taylor-Hood elements on a uniform mesh. {\color{black} Initial guess $\bu_0$ satisfies the boundary condition of the problem and $\bu_0=0$ elsewhere in the domain.}

Figure \ref{relerrplots} shows the number of iterations of the Anderson accelerated Picard iteration with varying depth $m$ and regularization parameter $\eps$, when  $h=1/64$ and yield stresses $\taus=2$ and $\taus=5$. Iterations were  run until the relative $L^2$ velocity residual fell below $10^{-8}$.  As $\eps$ becomes smaller, the required number of iterations increases. As illustrated in Figure \ref{relerrplots}, the original (unaccelerated) Picard method converges very slowly compared to the accelerated method. However, with AA, convergence is much faster.  While larger $m$ gives faster convergence, we note there is only modest gain past $m=1$ in this test.

Figure \ref{contours} shows the growth of rigid region (white) as the value of yield stress $\taus$ increases. When the rigid region enlarges, the yielded (fluid) region (shaded) remains close to the lid. These results agree well with those in \cite{MZ01,O09,BE1980}.

\begin{figure}[H]
	\centering
	\includegraphics[width = .3\textwidth, height=.25\textwidth]{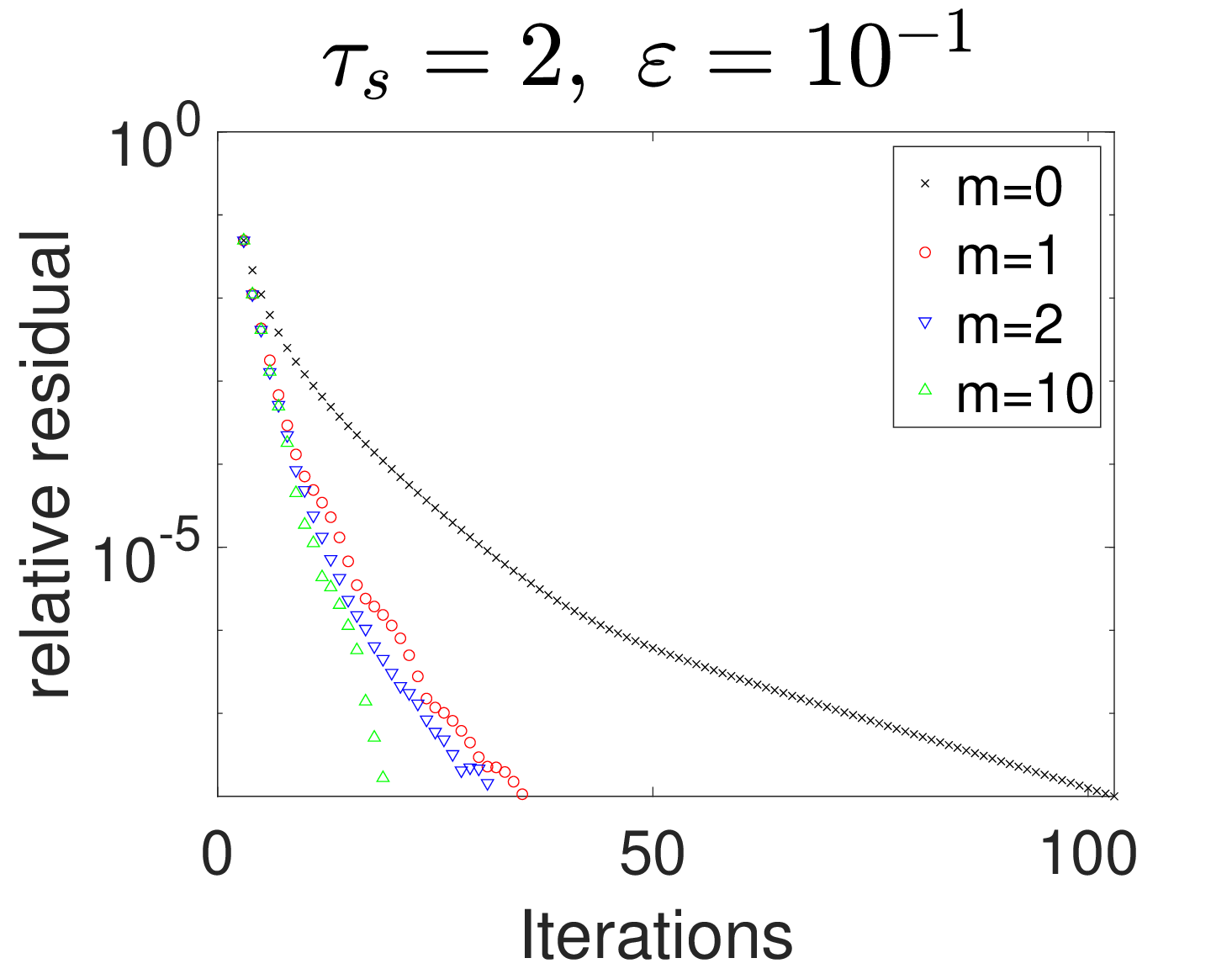}
	\includegraphics[width = .3\textwidth, height=.25\textwidth]{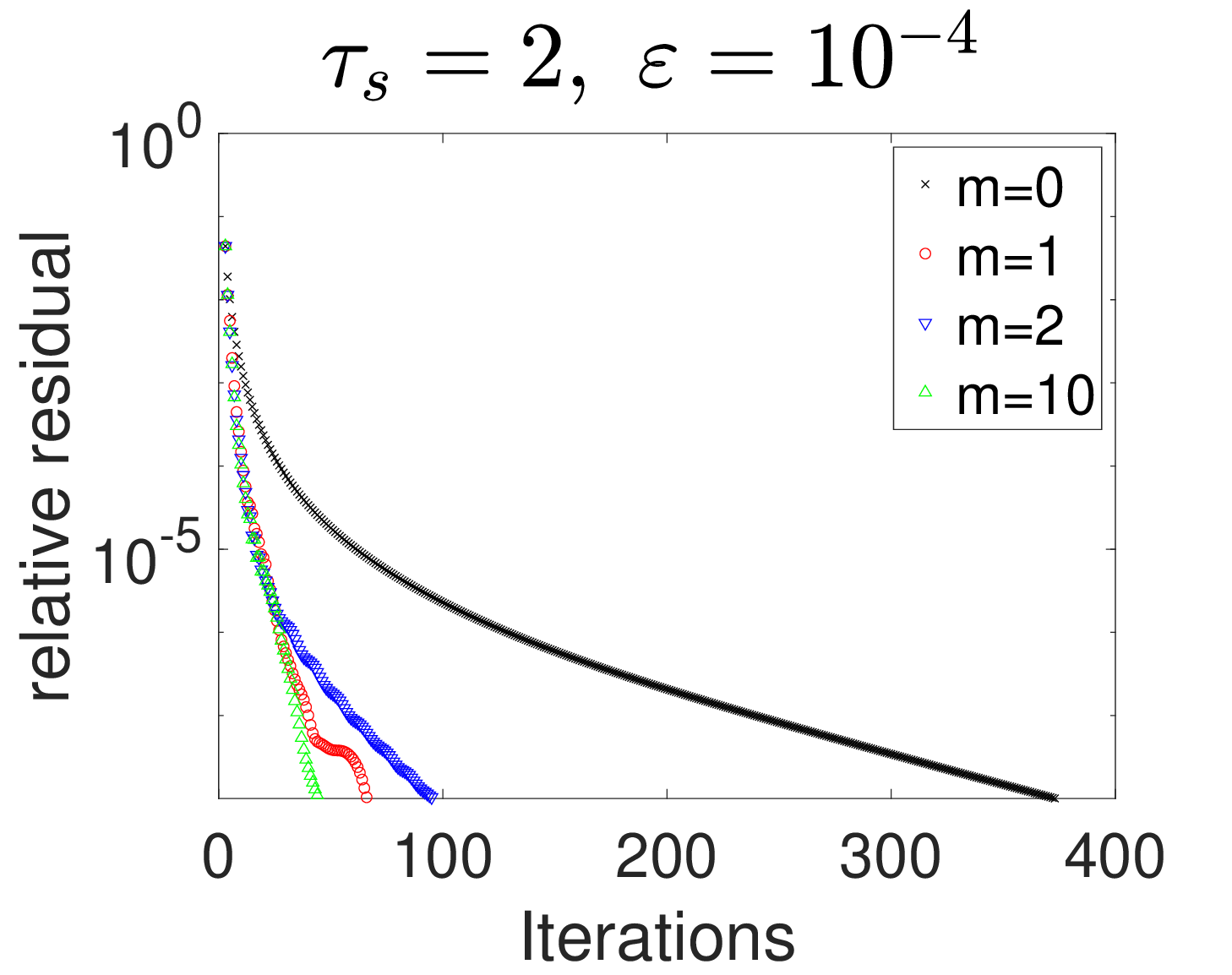}
	\includegraphics[width = .3\textwidth, height=.25\textwidth]{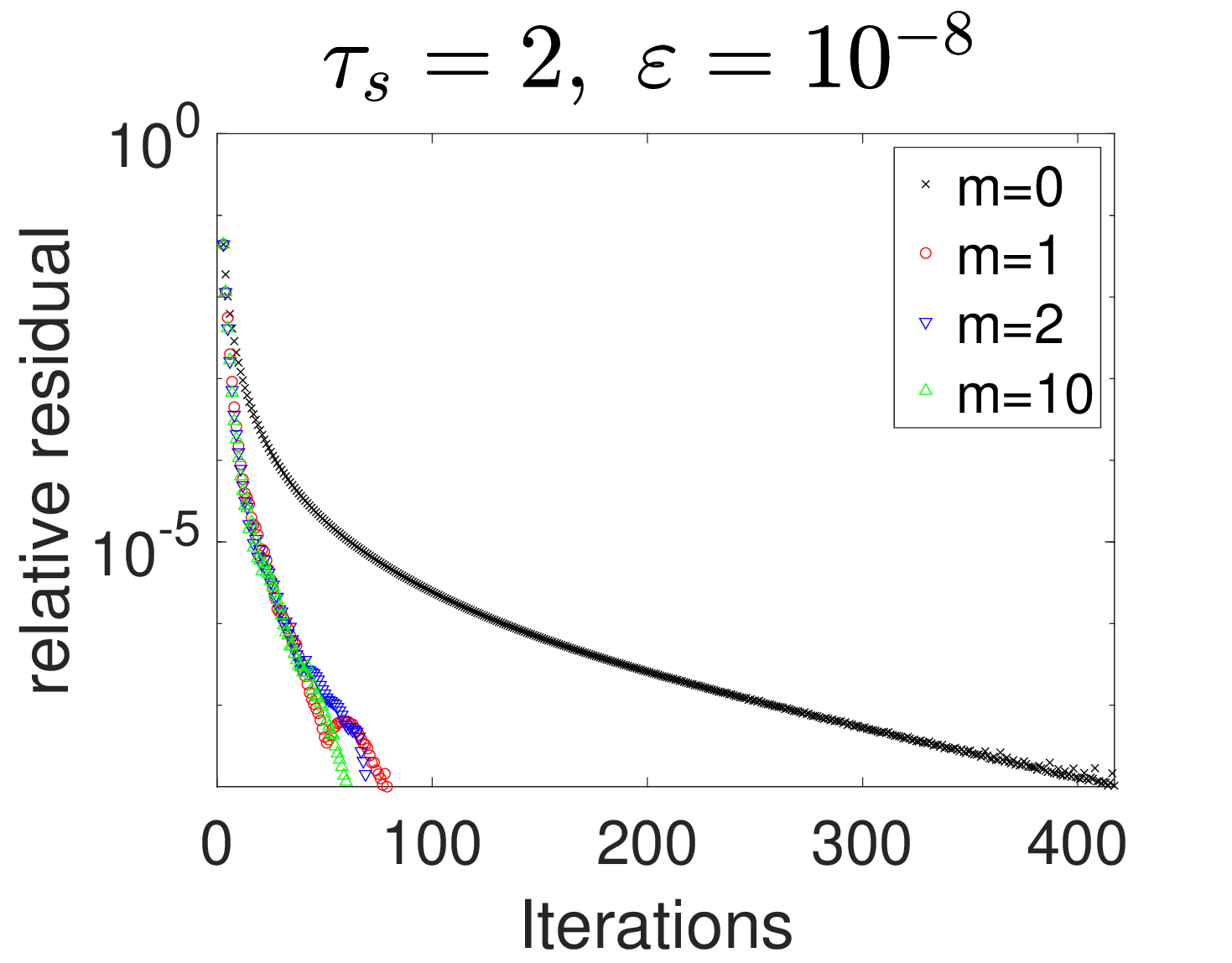}
\\
	\includegraphics[width = .3\textwidth, height=.25\textwidth]{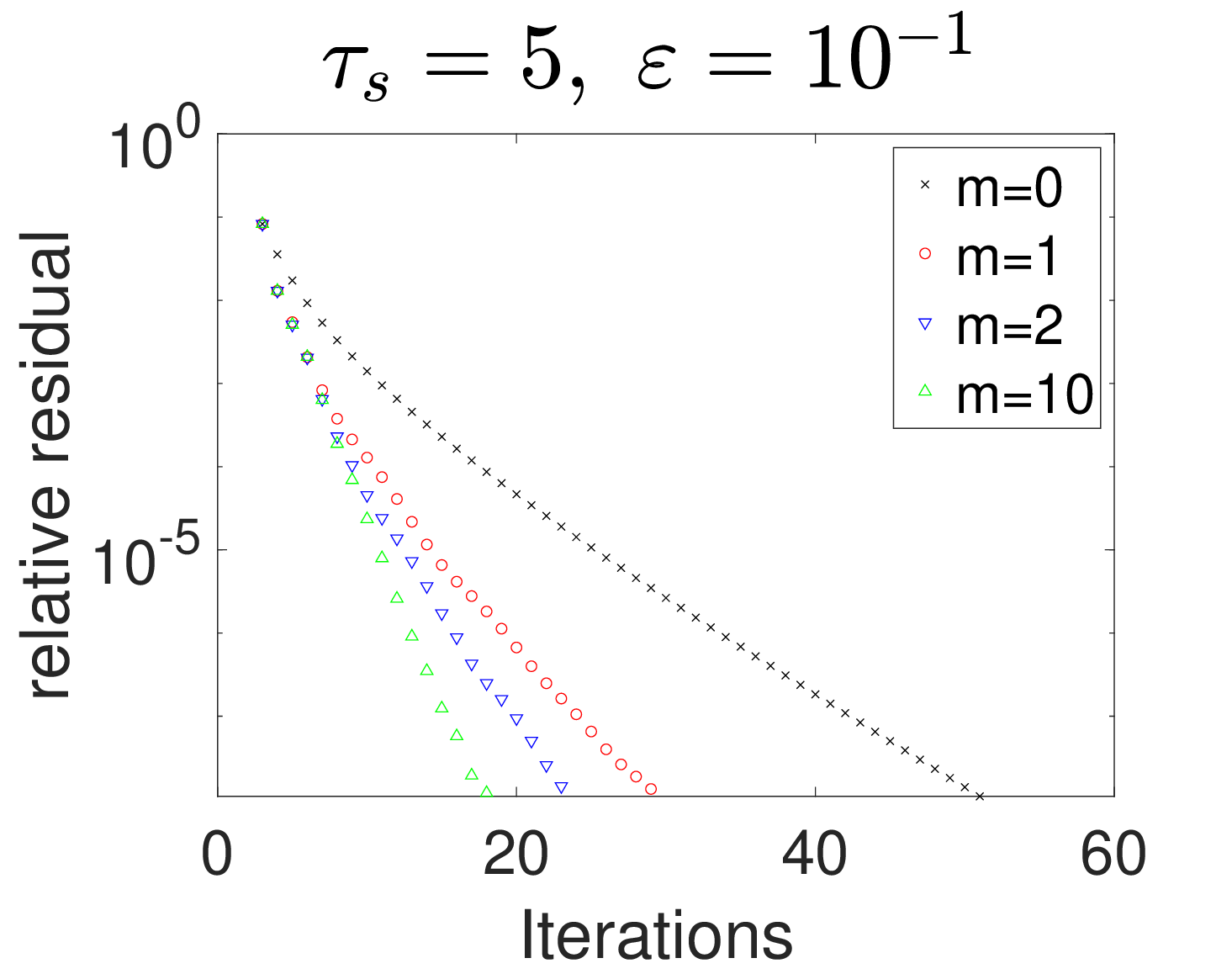}
	\includegraphics[width = .3\textwidth, height=.25\textwidth]{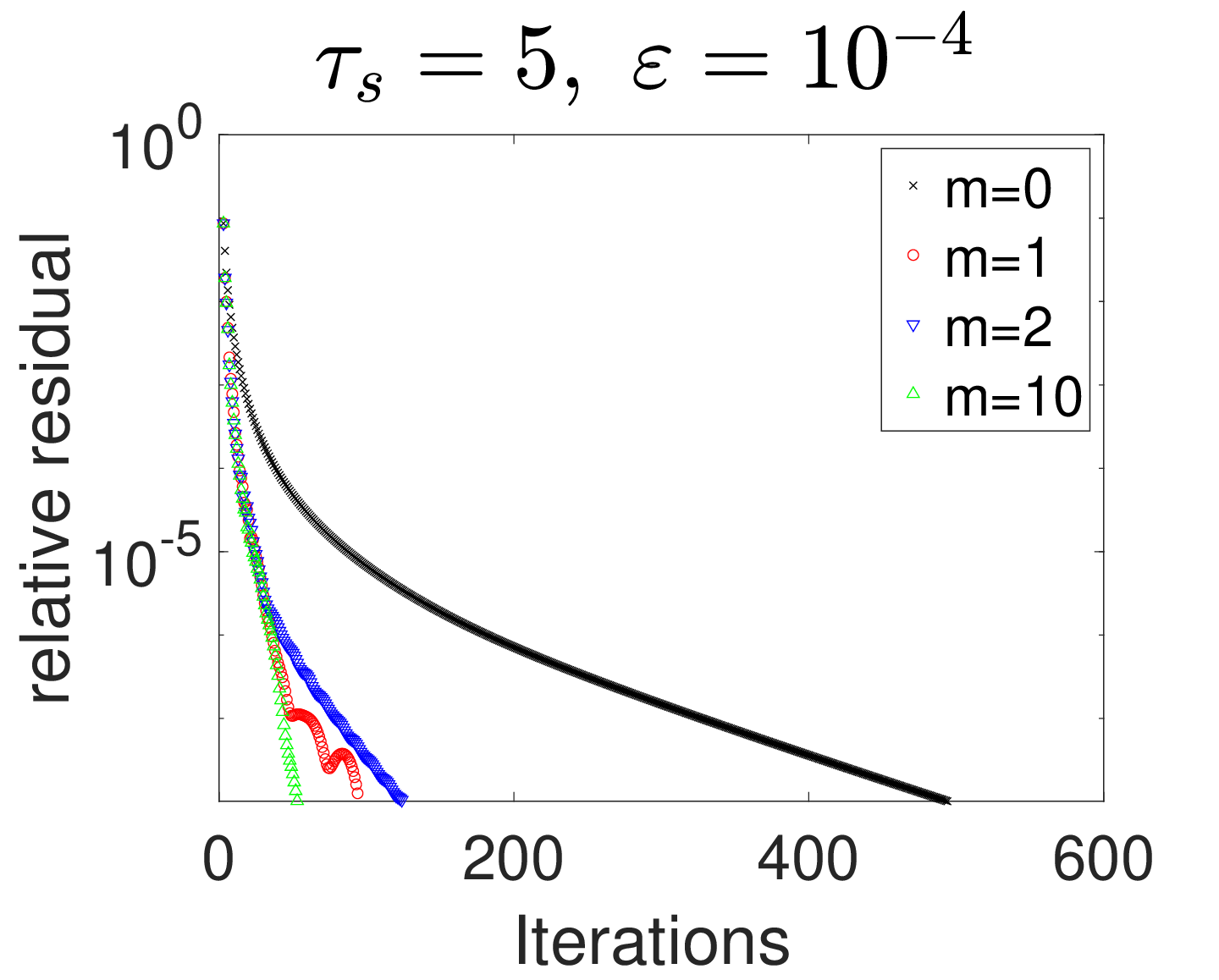}
	\includegraphics[width = .3\textwidth, height=.25\textwidth]{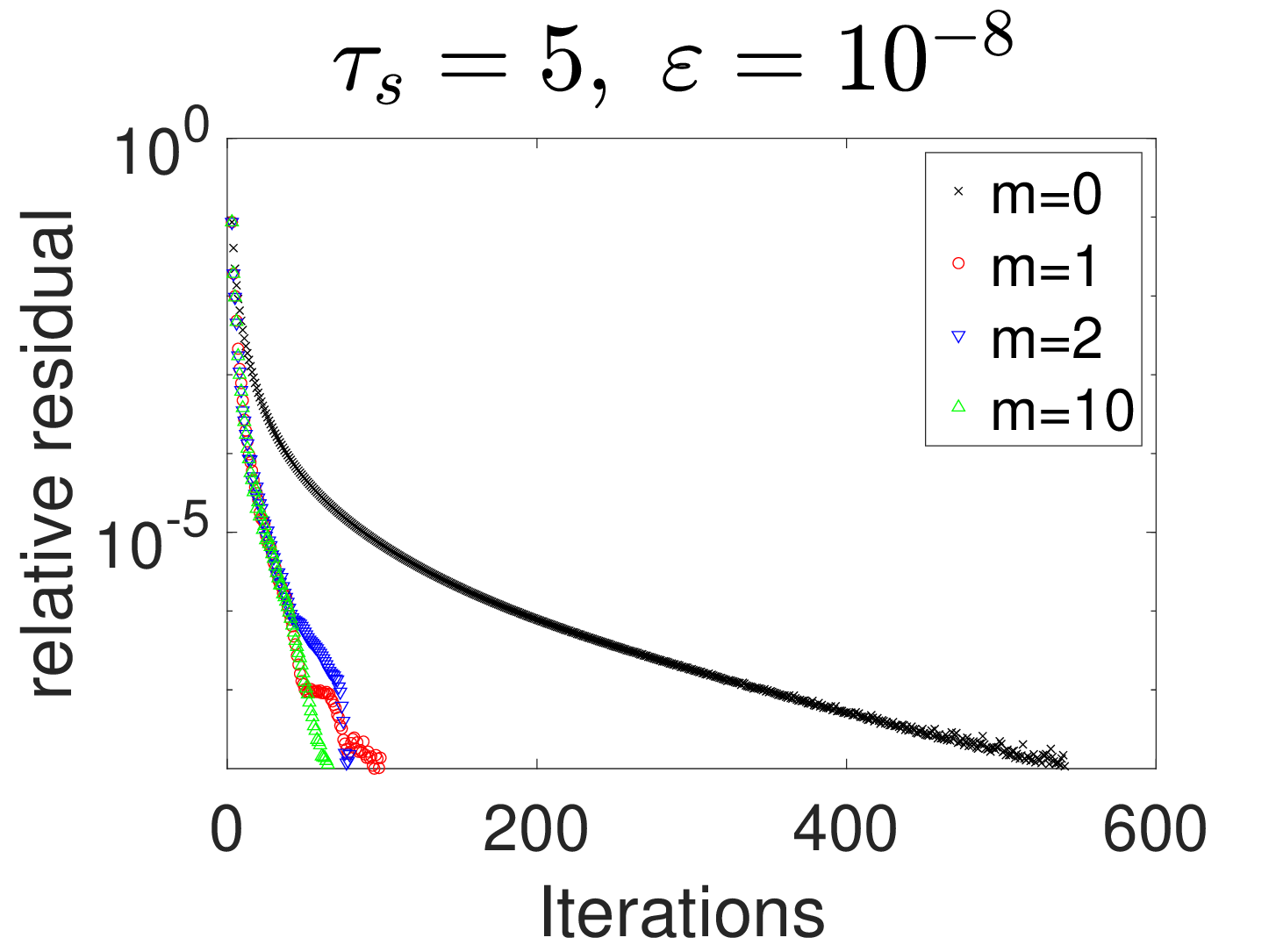}
	\caption{{\color{black}Convergence for different $\varepsilon=10^{-1}, 10^{-4},10^{-8}$(left to right), when  $\tau_s=2$ (top) and  $\tau_s=5$ (bottom)  with varying $m$.}}\label{relerrplots}
\end{figure}

\begin{figure}[H]
		\centering
	\includegraphics[width = .3\textwidth, height=.22\textwidth]{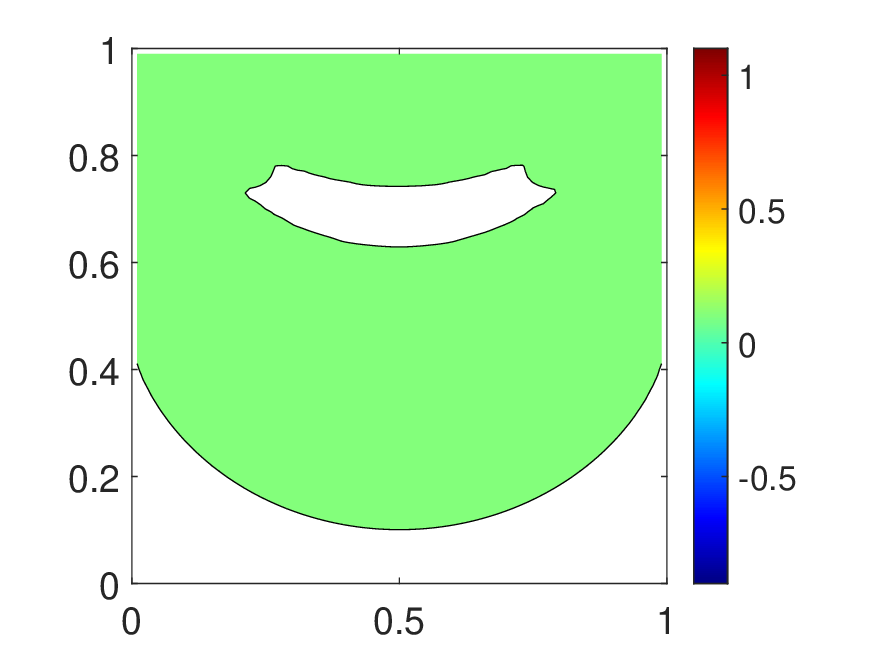}
	\includegraphics[width = .3\textwidth, height=.22\textwidth]{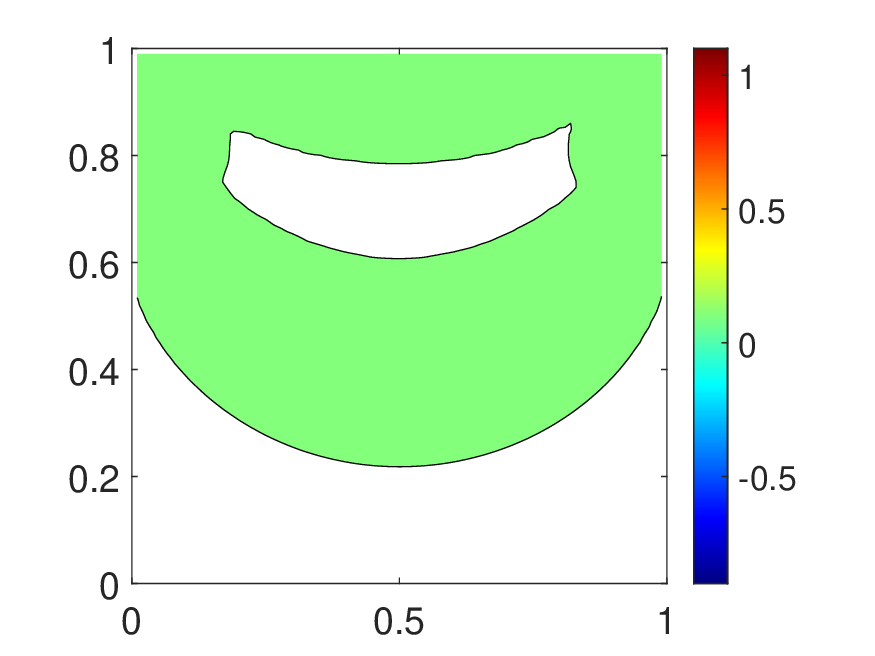}
	\includegraphics[width = .3\textwidth, height=.22\textwidth]{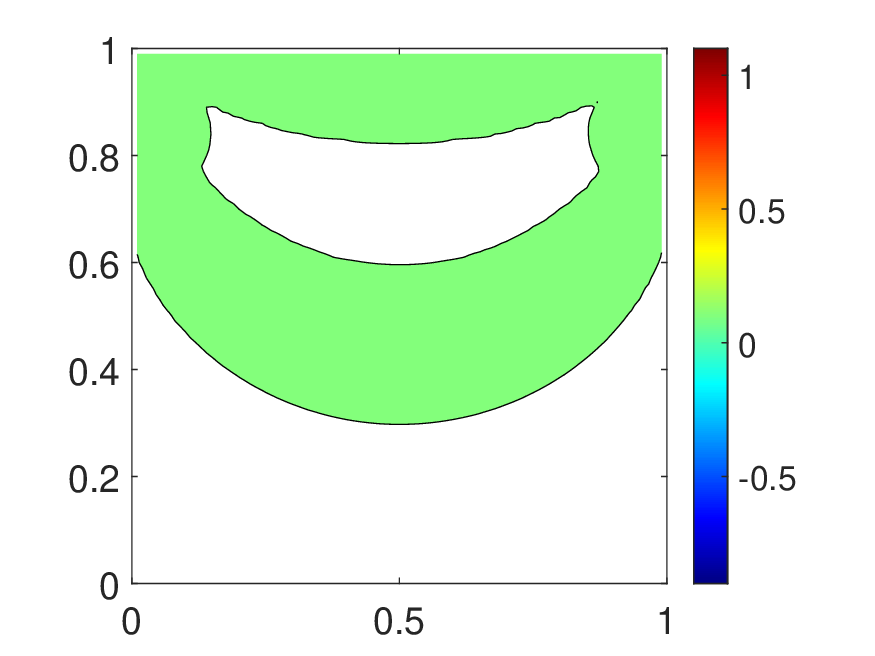}
	\caption{Growth of the rigid region (white) for lid-driven flow by $\tau_s=2,5,10$ (left to right) when $h=1/64$ and $\eps=10^{-4}$.}\label{contours}
\end{figure}

\subsection{3D driven cavity}
We now test the Anderson accelerated Picard iteration for regularized Bingham equations on the 3D lid-driven cavity. In this problem, the domain is the unit cube, there is no forcing $ (\blf = {\bf 0}) $, and homogeneous Dirichlet boundary conditions are enforced on all walls and $\bu=(1,0,0)^T$ on the moving lid. We compute with $(P_2,P_1)$ elements on Alfeld split tetrahedral meshes with 134,457 total degrees of freedom (dof) weighted towards the boundary using a Chebychev grid before tetrahedralizing. We test our scheme with varying $m$, regularization parameter $\eps$, and yield stress $\taus$. {\color{black} Initial guess $\bu_0$ satisfies the boundary condition of the problem and $\bu_0=0$ everywhere else in the domain.} Our stopping criteria is residual $\|D(\bu_k-G(\bu_k))\|\leq 10^{-5}$ or 500 iterations.


{\color{black} Figure \ref{iterationsml4} illustrates the positive impact of AA on convergence for different value of $\taus$ and $m$.}
 As $\taus$ increases, number of iterations grows since the rigid zones become larger and may completely block the flow when $\taus$ is sufficiently large. For smaller values of $\eps$, more iterations are required; however, using AA reduces the iteration counts significantly 
and enables convergence even with larger values of $\taus$.

In Figure \ref{centerlines3d}, we compare centerline x-velocities when $\eps=10^{-4}$ for varying $\taus=1, 2, 5$ and $10$ (i.e. growing rigid zones) and obtain good agreement with those found in \cite{VBL03} with P1/P1 stabilized elements and \cite{O09} with 
a finite difference method.

\begin{figure}[H]
\centering
\includegraphics[width = .5\textwidth, height=.4\textwidth]{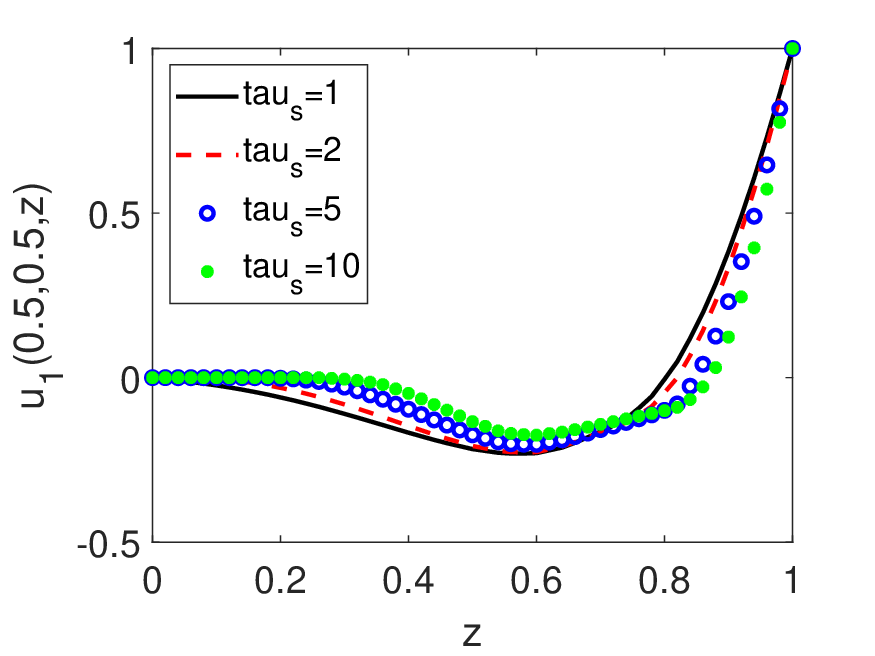}
\caption{Shown above is the centerline x-velocity plots for the 3D driven cavity simulations for $\taus=1,2,5$ and $10$, using AA Picard iteration for regularized Bingham equation with $m=10$ and $\eps=10^{-4}$, 134,457 dof.}\label{centerlines3d}
\end{figure}

\begin{figure}[H]
	\centering
	\includegraphics[width = .3\textwidth, height=.22\textwidth]{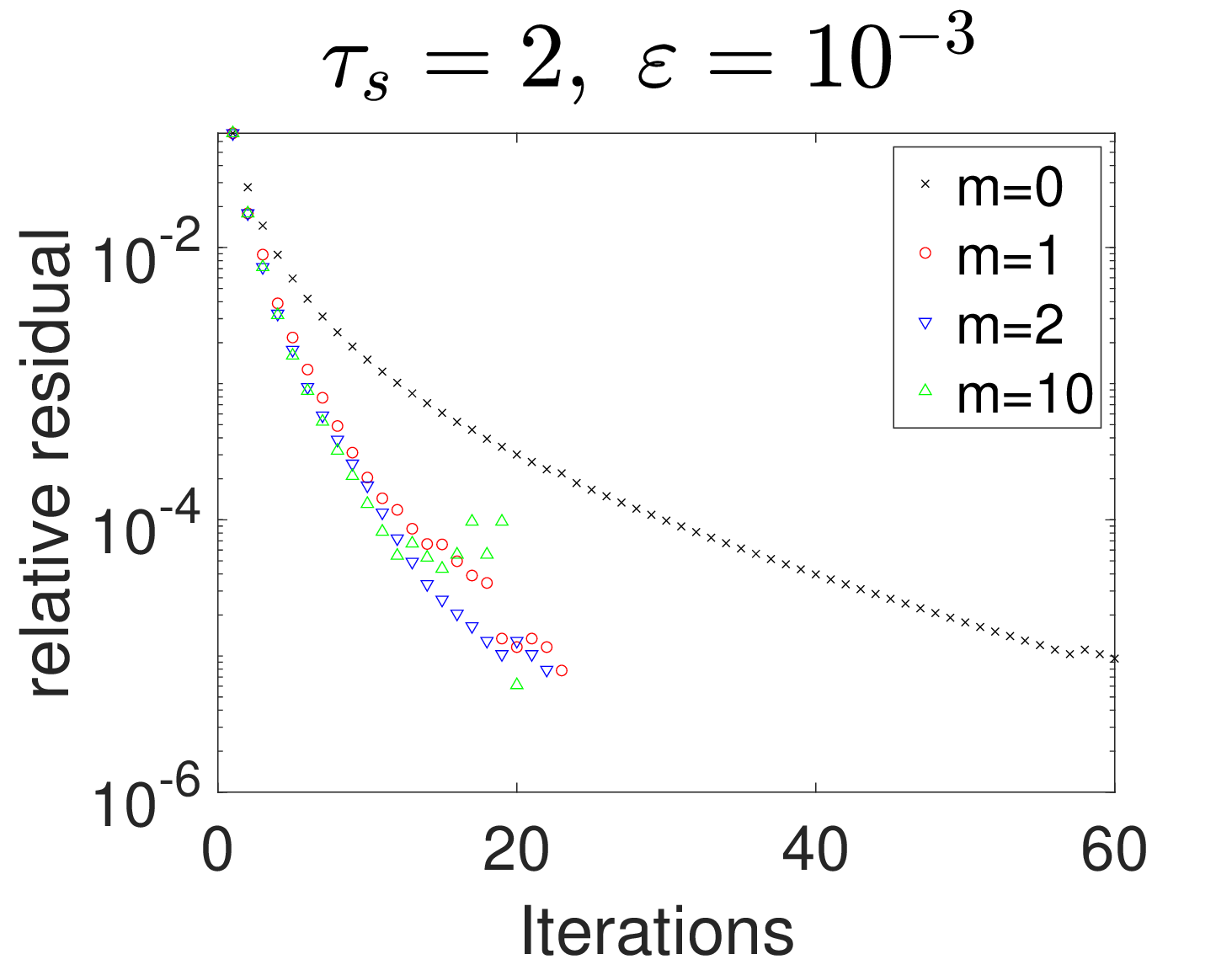}
	\includegraphics[width = .3\textwidth, height=.22\textwidth]{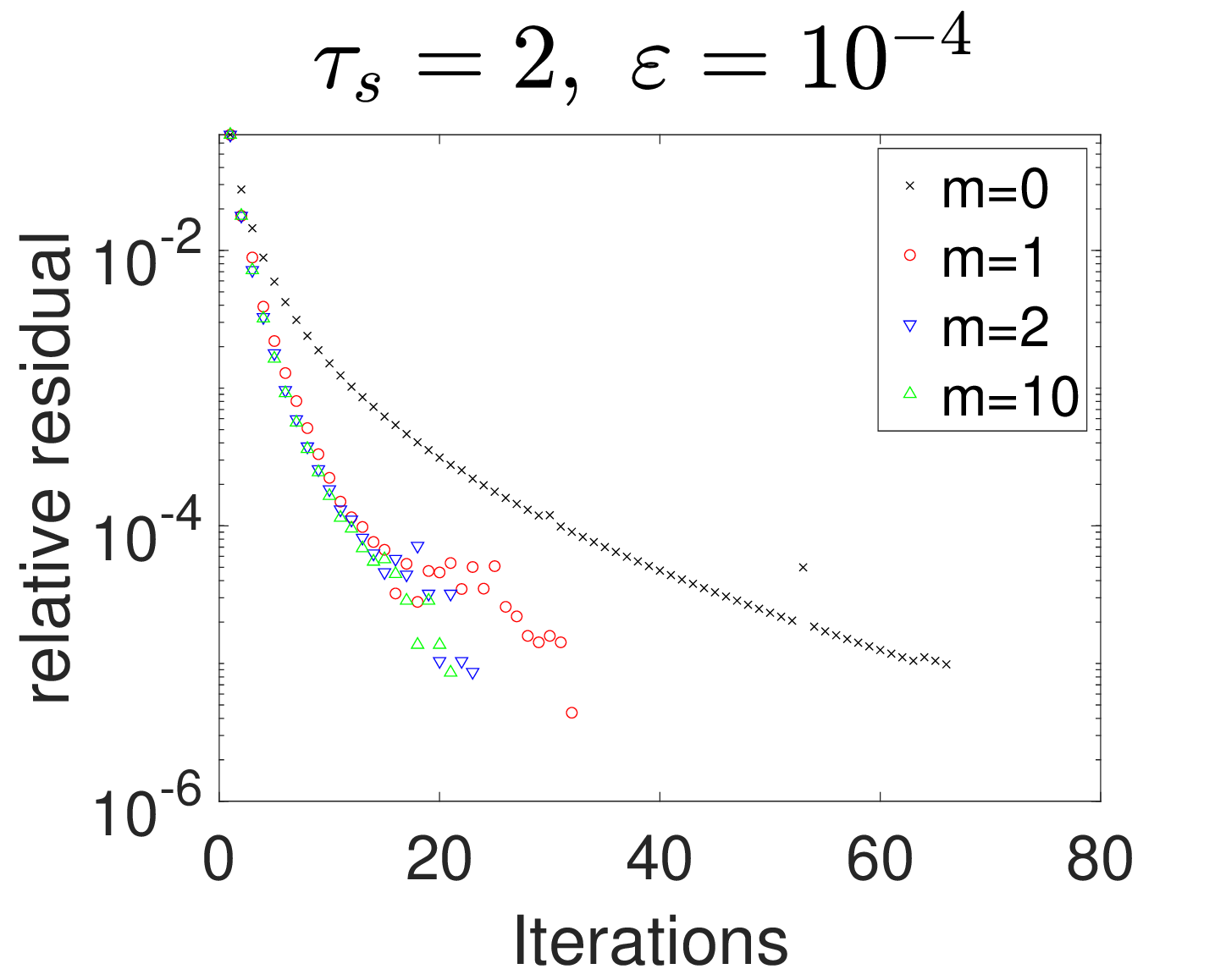}
	\includegraphics[width = .3\textwidth, height=.22\textwidth]{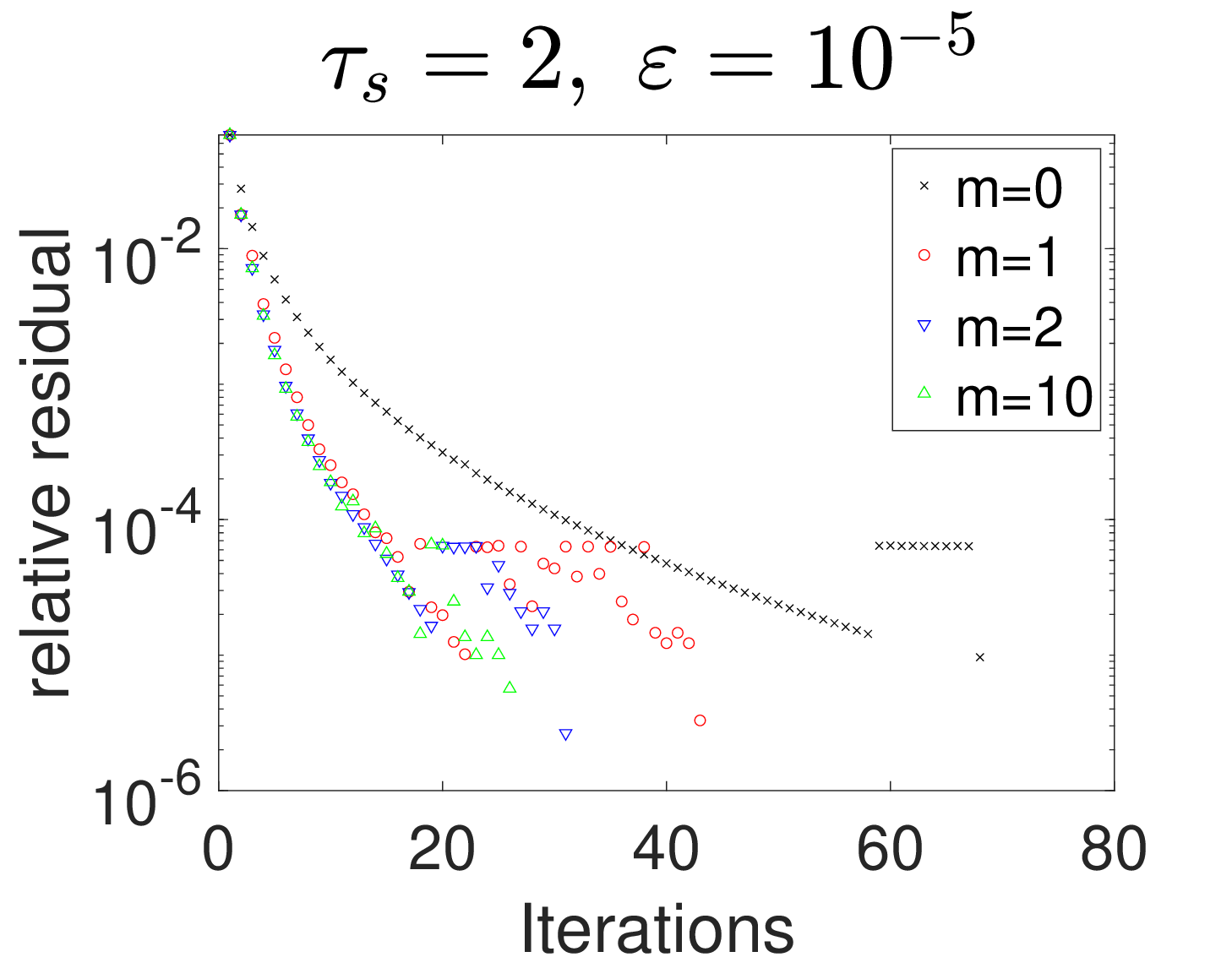}
	\\
		\includegraphics[width = .3\textwidth, height=.22\textwidth]{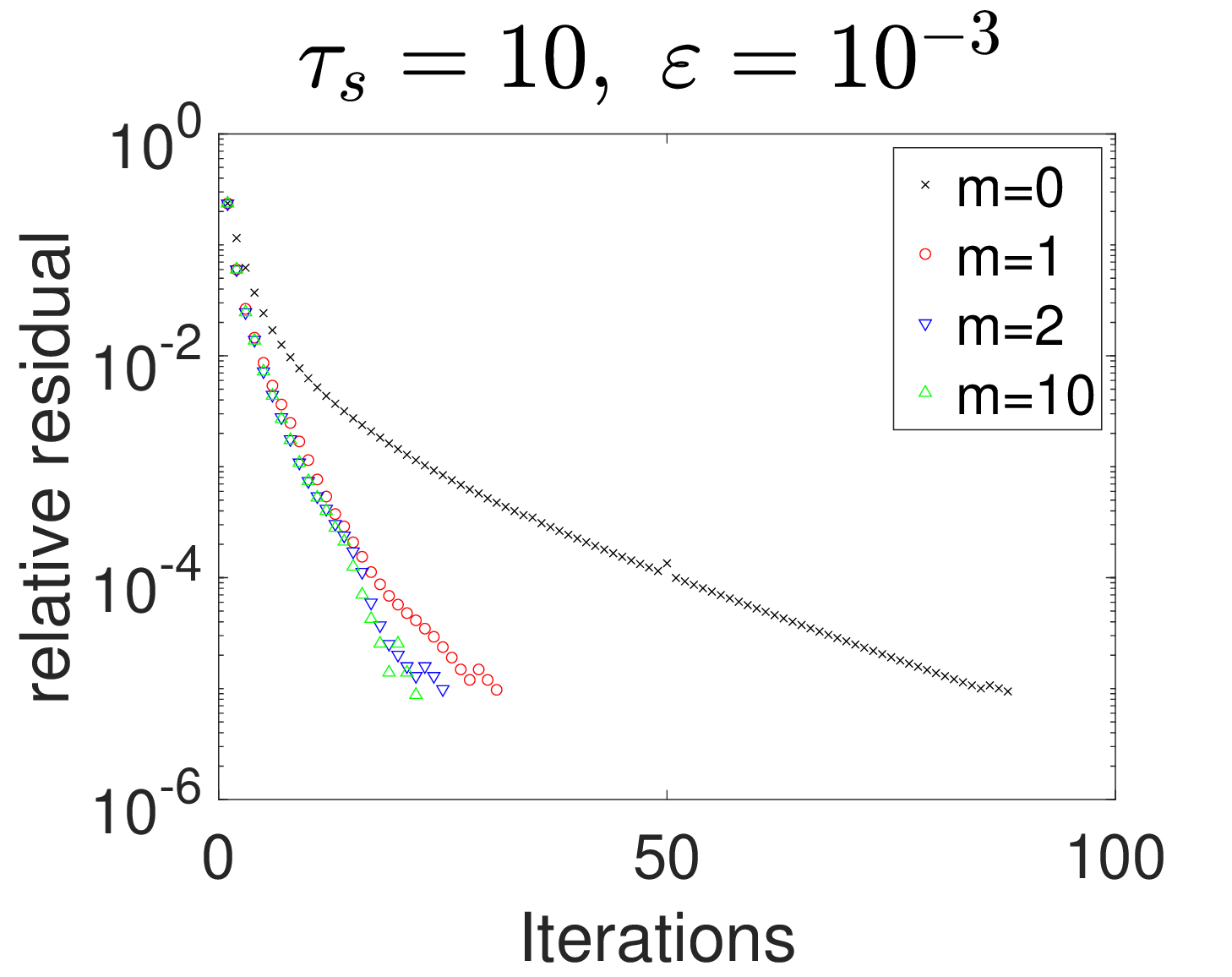}
	\includegraphics[width = .3\textwidth, height=.22\textwidth]{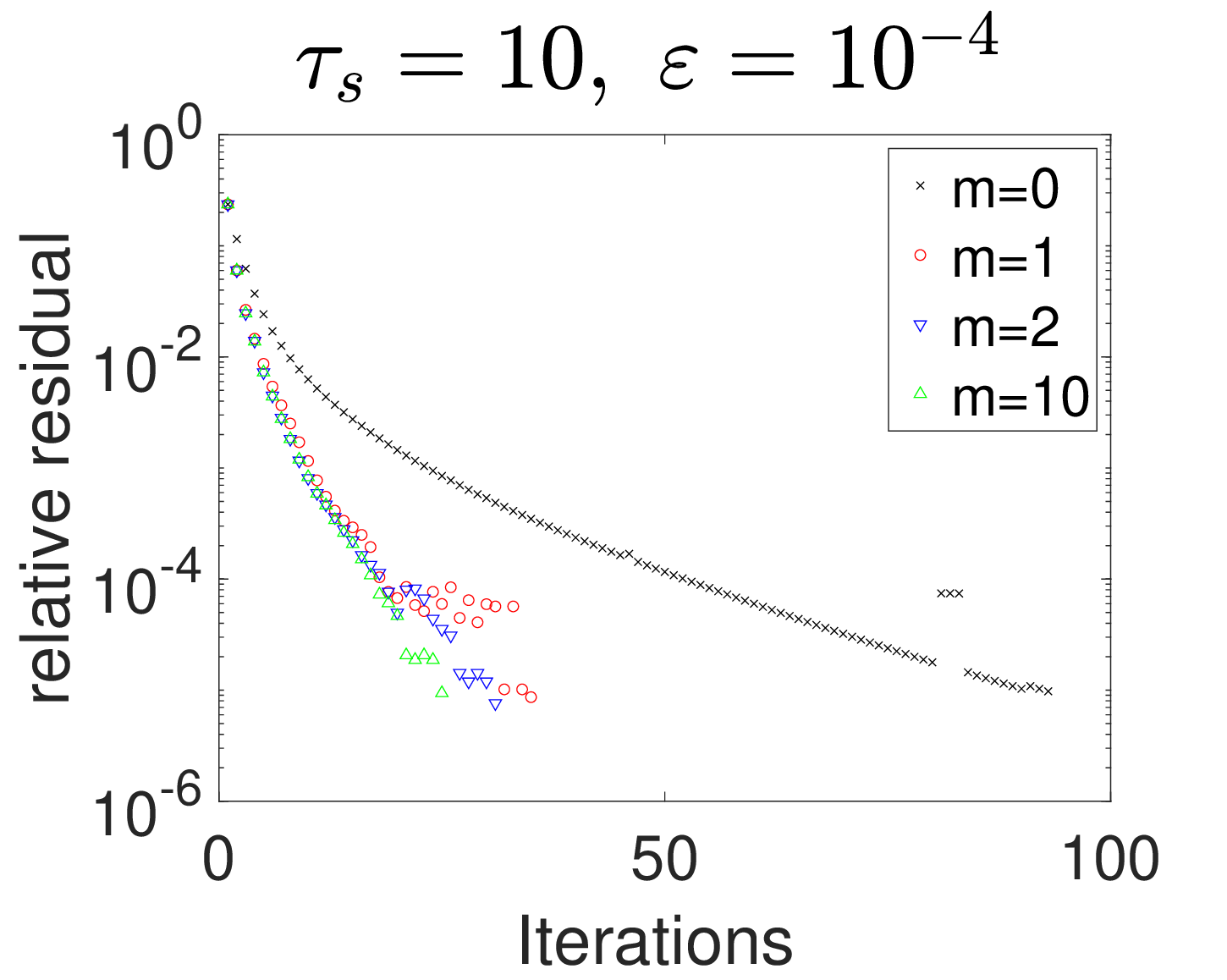}
	\includegraphics[width = .3\textwidth, height=.22\textwidth]{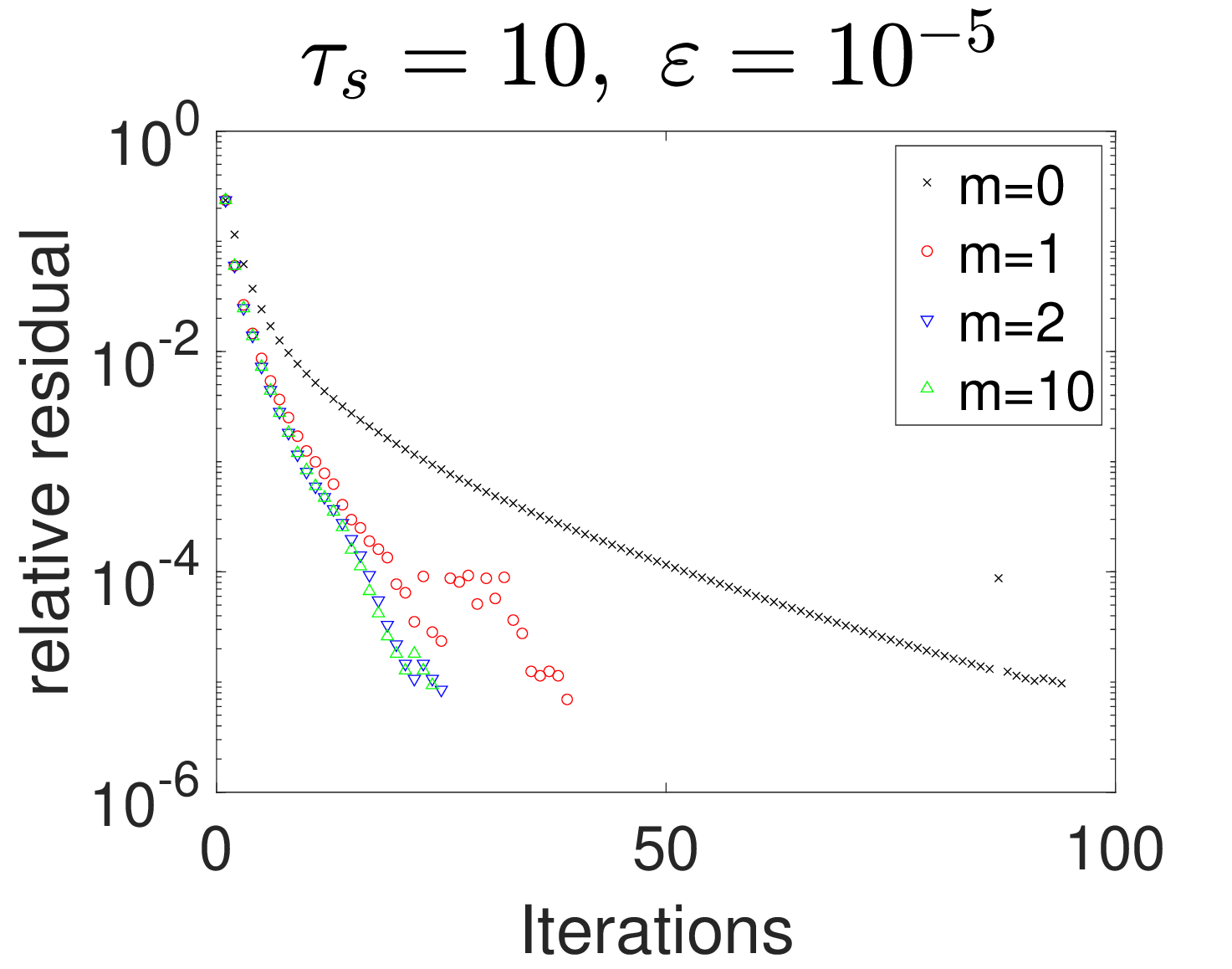}
	\caption{{\color{black}Required number of iterations, velocity residual$\leq10^{-5}$, 134,457 dof, $\tau_s=2$ (top) and $\tau_s=10$ (bottom) with varying $m$ for $\eps=10^{-3},10^{-4},10^{-5}$.}}\label{iterationsml4}
\end{figure}


%

\subsection{Numerical verification of Assumption \ref{assume:fg}}

The application of the AA theory from \cite{PR21} to the Picard iteration for the 
regularized Bingham equations relies on the satisfaction of Assumptions \ref{assume:g} 
and \ref{assume:fg}. 
We analytically verified Assumption \ref{assume:g} in section \ref{picardsection}.
Assumption \ref{assume:fg} is satisfied if the Jacobian of $g$ does not degenerate.
To demonstrate the satisfaction of this assumption, here we calculate the ratio 
$\frac{\|w_k-w_{k-1}\|}{\|u_k-u_{k-1}\|}$ for two numerical tests using constant $m$ 
to show the $\sigma_k$ (the minimum of the $k$ through $k-m_k$ ratios) is bounded well 
above 0.  Results for the analytical test problem and 2D driven cavity are shown in 
Figure \ref{sigmas}.  We observe that the ratios never get close to 0, and in 
general get larger for larger $m$.


\begin{figure}[H]
	\centering
	\includegraphics[width = .49\textwidth, height=.22\textwidth]{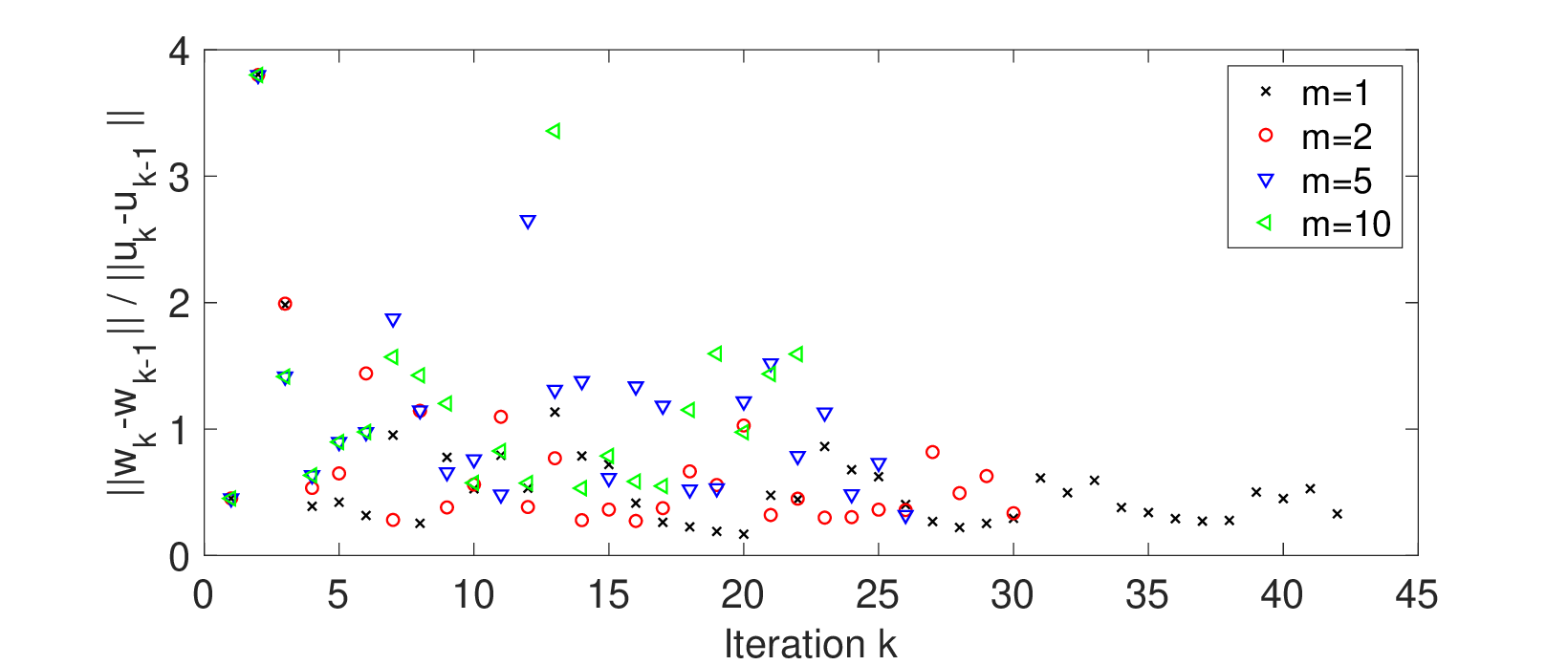}
	\includegraphics[width = .49\textwidth, height=.22\textwidth]{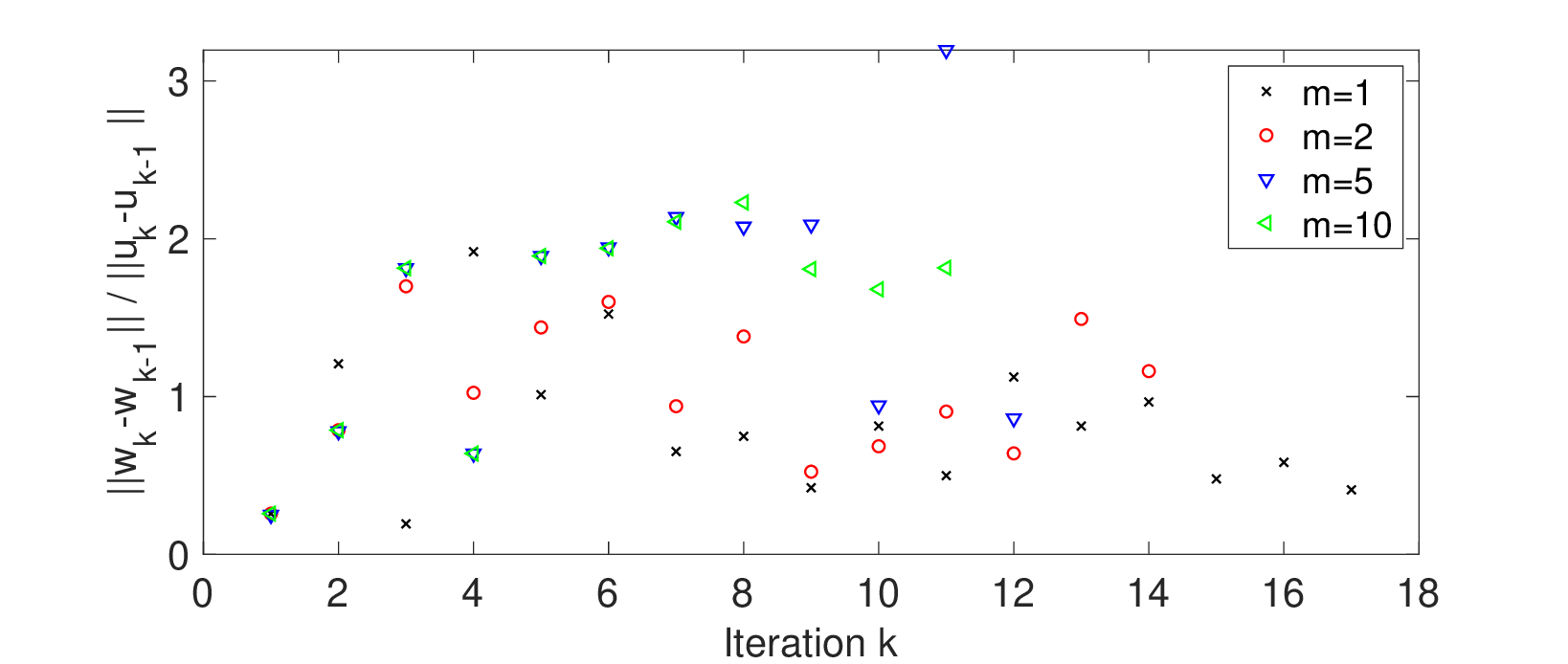}
	\caption{Shown above are ratios of $\frac{\|w_k-w_{k-1}\|}{\|u_k-u_{k-1}\|}$ for (left) the analytical test using $h=1/32$, $\tau_s=0.3$, $\varepsilon=10^{-3}$ and (right) for 2d lid driven cavity $h=1/64$, $\tau_s=2$, $\varepsilon=10^{-1}$, both with varying $m$.}\label{sigmas}
\end{figure}

\section{Convergence of the Finite Element Discretization}\label{femsection}
The convergence analysis of the numerical solutions of regularized Bingham equations by general mixed FEM does not appear well studied in the literature, and so we include here a convergence analysis for completeness.  First, we establish convergence of the velocity solution of (\ref{fem}) to the velocity of (\ref{regeqn}).

\begin{theorem}\label{convthm}
Let $(\bu,p)$ be the solution pair of the regularized Bingham problem (\ref{regeqn}).  The error in the solution $\bu_h$ to  (\ref{discdivfreeprob}) satisfies
\begin{equation}\label{firsterrbd}
	\begin{aligned}
		\|D(\bu-\bu_h)\|
		&\leq  \mu^{-1} \sqrt{d}  \inf_{q_h\in Q_h} \|p- q_h\| + 
		\left(3
		+
		3\taus^2 \eps^{-2} \mu^{-2} \right)^{1/2}\|D(\bu-I_h(\bu))\|.
	\end{aligned}
\end{equation}

Furthermore, if  $h$ is small enough so that $\| |D\bu|^{-1}_{\eps} D(I_h(\bu)-\bu_h) \| \leq1$, we also have the velocity error bound
\begin{equation}\label{seconderrbd}
	\begin{aligned}
		\|D(\bu-\bu_h)\|
		&\leq  \mu^{-1} \sqrt{d}  \inf_{q_h\in Q_h}  \|p- q_h\|
		+
		\sqrt{2} \|D(\bu-I_h(\bu))\| 
		+
		\left(2\mu^{-1}\taus \right)^{1/2} \|D(\bu-I_h(\bu))\|^{1/2}.
	\end{aligned}
\end{equation}

%
%

\end{theorem}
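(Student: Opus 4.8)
The plan is to run a Céa/Strang-type argument built on the strong monotonicity already recorded in \eqref{monotonicityfem}. First I would split the error through the interpolant, writing $\bleta := \bu - I_h(\bu)$ and $\bphi_h := \bu_h - I_h(\bu) \in \bV_h$, so that $\bu - \bu_h = \bleta - \bphi_h$ and by the triangle inequality it suffices to control $\|D\bphi_h\|$. Taking $\bv_h = I_h(\bu)$ in \eqref{monotonicityfem} gives $2\mu\|D\bphi_h\|^2 \le a_\eps(\bu_h,\bphi_h) - a_\eps(I_h(\bu),\bphi_h)$. Since $\bphi_h \in \bV_h$, the discrete equation \eqref{discdivfreeprob} yields $a_\eps(\bu_h,\bphi_h) = (\blf,\bphi_h)$, while testing the continuous form \eqref{weak} with $\bphi_h \in \bX$ and using that $\bphi_h$ is discretely divergence free (so $(p,\nabla\cdot\bphi_h) = (p-q_h,\nabla\cdot\bphi_h)$ for every $q_h\in Q_h$) gives $(\blf,\bphi_h) = a_\eps(\bu,\bphi_h) - (p-q_h,\nabla\cdot\bphi_h)$. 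Combining these produces the error identity
\[
2\mu\|D\bphi_h\|^2 \le 2\mu(D\bleta,D\bphi_h) + \taus\left(\frac{D\bu}{|D\bu|_\eps} - \frac{DI_h(\bu)}{|DI_h(\bu)|_\eps},\,D\bphi_h\right) - (p-q_h,\nabla\cdot\bphi_h),
\]
valid for every $q_h \in Q_h$.

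The two linear terms are routine. The viscous term is bounded by Cauchy--Schwarz as $2\mu\|D\bleta\|\,\|D\bphi_h\|$. For the pressure term I would write $\nabla\cdot\bphi_h = \operatorname{tr}(D\bphi_h)$ and use the pointwise bound $|\operatorname{tr}(D\bphi_h)| \le \sqrt d\,|D\bphi_h|$, giving $\|\nabla\cdot\bphi_h\| \le \sqrt d\,\|D\bphi_h\|$ and a contribution $\le \sqrt d\,\|p-q_h\|\,\|D\bphi_h\|$; taking the infimum over $q_h\in Q_h$ at the end recovers the $\inf_{q_h}\|p-q_h\|$ factor and explains the $\sqrt d$. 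The whole difficulty is therefore concentrated in the nonlinear term, which I would control through the elementary pointwise decomposition (the same one underlying \eqref{monotonicityfem})
\[
\frac{D\bu}{|D\bu|_\eps} - \frac{DI_h(\bu)}{|DI_h(\bu)|_\eps} = \frac{D\bleta}{|D\bu|_\eps} + \left(\frac{1}{|D\bu|_\eps} - \frac{1}{|DI_h(\bu)|_\eps}\right)DI_h(\bu).
\]
Using $\big||D\bu|_\eps - |DI_h(\bu)|_\eps\big| \le |D\bleta|$ and $|DI_h(\bu)|/|DI_h(\bu)|_\eps \le 1$, the left-hand side is bounded pointwise by $2|D\bleta|/|D\bu|_\eps$.

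From here the two estimates diverge only in how the weight $|D\bu|_\eps^{-1}$ is treated. For \eqref{firsterrbd} I would use the regularization floor $|D\bu|_\eps \ge \eps$ to get $\le 2|D\bleta|/\eps$, so Cauchy--Schwarz bounds the nonlinear term by $2\taus\eps^{-1}\|D\bleta\|\,\|D\bphi_h\|$; dividing the error identity by $\|D\bphi_h\|$, applying the triangle inequality, and regrouping with Young's inequality (via $(a+b+c)^2 \le 3(a^2+b^2+c^2)$) produces the $\eps$-dependent coefficient $(3 + 3\taus^2\eps^{-2}\mu^{-2})^{1/2}$. For the $\eps$-robust bound \eqref{seconderrbd} I would instead retain the weight and apply a weighted Cauchy--Schwarz, $2\taus\big(|D\bleta|/|D\bu|_\eps,\,|D\bphi_h|\big) \le 2\taus\|D\bleta\|\,\big\||D\bu|_\eps^{-1}D\bphi_h\big\|$, and then invoke the smallness hypothesis $\||D\bu|_\eps^{-1}D(I_h(\bu)-\bu_h)\| \le 1$ to bound the nonlinear term by $2\taus\|D\bleta\|$ --- now \emph{linear} in $\|D\bleta\|$ and independent of $\eps$. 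The identity then reads $2\mu\|D\bphi_h\|^2 \le b\,\|D\bphi_h\| + 2\taus\|D\bleta\|$ with $b = 2\mu\|D\bleta\| + \sqrt d\,\|p-q_h\|$, and solving this quadratic inequality (via $x^2 \le \alpha x + \beta \Rightarrow x \le \alpha + \sqrt\beta$) is exactly what generates the half-order term $(2\mu^{-1}\taus)^{1/2}\|D\bleta\|^{1/2}$.

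The main obstacle is the nonlinear term, and specifically the $\eps$-independent estimate \eqref{seconderrbd}: the naive Lipschitz bound of the map $A \mapsto A/|A|_\eps$ carries an unavoidable factor $\eps^{-1}$, so the gain must come from keeping the weight $|D\bu|_\eps^{-1}$ and absorbing it through the smallness assumption rather than through the floor $\eps$. The price is that the nonlinear contribution becomes only linear (not quadratic) in $\|D\bphi_h\|$, which forces the quadratic-inequality step and hence the loss of half an order in $h$. I would also remark that the smallness hypothesis is natural, since its left side is controlled by $\eps^{-1}\|D(I_h(\bu)-\bu_h)\|$, which \eqref{firsterrbd} shows tends to zero as $h\to 0$ for fixed $\eps$.
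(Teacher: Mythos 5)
Your argument is correct. For the first bound \eqref{firsterrbd} it is essentially the paper's proof: monotonicity of $a_{\eps}$ tested against the interpolation error, Galerkin orthogonality to bring in $p-q_h$, and an add--subtract decomposition of the nonlinear consistency term combined with the floor $|D\bu|_{\eps}\ge\eps$ (the paper inserts $\frac{D\bu}{|DI_h(\bu)|_{\eps}}$ rather than your $\frac{D\bu_{\vphantom{h}}}{|D\bu|_{\eps}}$-weighted split, but both yield the same $2\taus\eps^{-1}\|D\bleta\|\,\|D\bphi_h\|$). For the second bound \eqref{seconderrbd} your route is genuinely different. The paper abandons the monotonicity inequality there and works with the error equation between $\bu$ and $\bu_h$ directly, adding and subtracting $\frac{D\bu_h}{|D\bu|_{\eps}}$ so that a weighted coercivity term $\taus\||D\bu|_{\eps}^{-1/2}D\bphi_h\|^2$ appears on the left-hand side; the difference $\frac{1}{|D\bu|_{\eps}}-\frac{1}{|D\bu_h|_{\eps}}$ is then expanded in terms of $D(\bu-\bu_h)$, which produces a quadratic-in-$\bphi_h$ piece that must be absorbed by that weighted term, and the smallness hypothesis is invoked twice. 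You instead reuse the same monotonicity identity as for the first bound, so the nonlinear term involves only $\bu$ and $I_h(\bu)$, is bounded pointwise by $2|D\bleta|/|D\bu|_{\eps}$, and a single weighted Cauchy--Schwarz plus the hypothesis $\||D\bu|^{-1}_{\eps}D(I_h(\bu)-\bu_h)\|\le 1$ gives $2\taus\|D\bleta\|$; the half-order term then falls out of the quadratic inequality $x^2\le\alpha x+\beta\Rightarrow x\le\alpha+\sqrt{\beta}$ rather than out of term-by-term Young splittings. Your version is arguably cleaner --- one error identity serves both bounds, the weighted coercivity term is unnecessary, and the hypothesis is used only once --- at the cost of slightly different constants (e.g.\ $2$ rather than $\sqrt2$ on the interpolation term, but smaller constants on the pressure and half-order terms); since the paper's own final triangle-inequality step is itself loose about these constants, this is immaterial. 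Both proofs locate the source of the half-order loss in the same place: keeping the weight $|D\bu|^{-1}_{\eps}$ instead of the floor $\eps$ makes the nonlinear contribution only linear in the unknown error, which forces the square root.
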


\begin{remark} \label{rem1}
The velocity error bound \eqref{firsterrbd} is optimal in $h$ for common choices of mixed finite elements such as Taylor-Hood and Scott-Vogelius, however it depends inversely on $\eps$ which can be small.
The bound \eqref{seconderrbd} is independent of $\eps$ but suboptimal in $h$, and it requires $h$ small enough with respect to $\eps$ so that \eqref{firsterrbd} can be invoked to produce $\| |D\bu|^{-1}_{\eps} D(I_h(\bu)-\bu_h) \| \leq1$.  A sufficient condition on $h$ to produce this bound is 
\[
\mu^{-1} \sqrt{d}  \inf_{q_h\in Q_h} \|p- q_h\| + 
\left(3
+
3\taus^2 \eps^{-2} \mu^{-2} \right)^{1/2}\|D(\bu-I_h(\bu))\| 
\le \eps.
\]
If $\eps \ll 1$, $\tau_s,\mu \sim O(1)$, $ \inf_{q_h\in Q_h} \|p- q_h\| \sim h^{s}$, and $ \|D(\bu-I_h(\bu))\| \sim h^{s}$, this reduces to $h \le O(\eps^{2/s})$. We note this is likely not a realizable condition in practice, however it is only a sufficient (and not necessary) condition that we believe pessimistic, and in our numerical tests we see no negative scaling with respect to $\eps$.
\end{remark}

\begin{cor} 
Let $(\bu,p)$ be the true solution of regularized Bingham problem satisfying $\bu\in H^3(\Omega)\bigcap\bV$ and $p\in H^2(\Omega)\bigcap Q$.  Then if $(\bX_h,Q_h)=(P_2,P_1)$ Taylor-Hood elements are used and $h$ is sufficiently small  (see Remark \ref{rem1}) , the error in velocity satisfies
\begin{equation}
	\begin{aligned}
		\|D(\bu-\bu_h)\|
		\leq 
		\min\left\{
		\mathcal{O}( h)
		,\
		\mathcal{O}( h^2\eps^{-1})
		\right\}.
	\end{aligned}
\end{equation}
If instead $\bu\in H^2(\Omega)\bigcap\bV$ and the $(P^{b}_1,P_1)$ mini element is used and $h$ is sufficiently small  (see Remark \ref{rem1}), then the bound becomes
\begin{equation}
	\begin{aligned}
		\|D(\bu-\bu_h)\|
		\leq 
		\min\left\{
		\mathcal{O}( h^{1/2})
		,\
		\mathcal{O}( h\eps^{-1})
		\right\}.
	\end{aligned}
\end{equation}

\end{cor}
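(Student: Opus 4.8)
The plan is to apply the two velocity error bounds \eqref{firsterrbd} and \eqref{seconderrbd} of Theorem \ref{convthm} directly, substitute the standard interpolation and best-approximation estimates for each element pair, and then read off the dominant powers of $h$ and $\eps$. Since the corollary asserts a minimum of two rates, the two rates come from the two separate inequalities: the $\eps$-dependent but $h$-optimal rate will come from \eqref{firsterrbd}, whose interpolation term carries the weight $(3 + 3\taus^2 \eps^{-2}\mu^{-2})^{1/2}\sim \eps^{-1}$, while the $\eps$-independent but suboptimal rate will come from \eqref{seconderrbd}, whose last term scales like a square root of the interpolation error.

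First I would treat the Taylor--Hood case $(\bX_h,Q_h)=(P_2,P_1)$, for which $s=2$. Using $\bu\in H^3$ the interpolation estimate gives $\|D(\bu-I_h(\bu))\|\le C h^2 |\bu|_3$, and using $p\in H^2$ the pressure best approximation gives $\inf_{q_h\in Q_h}\|p-q_h\|\le C h^2 |p|_2$. Inserting these into \eqref{firsterrbd}, both terms are $O(h^2)$, but the interpolation term is multiplied by the factor $\sim\eps^{-1}$, so the right-hand side is $\mathcal{O}(h^2\eps^{-1})$. Inserting the same estimates into \eqref{seconderrbd}, the first two terms are $O(h^2)$ while the final term behaves like $\|D(\bu-I_h(\bu))\|^{1/2}\sim h$ and contains no factor of $\eps$; hence this bound is $\mathcal{O}(h)$. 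Taking the minimum yields the first claimed estimate.

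The mini-element case $(P^{b}_1,P_1)$ is structurally identical with $s=1$, so that $\|D(\bu-I_h(\bu))\|\le C h |\bu|_2$ under $\bu\in H^2$. The $\eps^{-1}$-weighted term in \eqref{firsterrbd} then becomes $\mathcal{O}(h\eps^{-1})$, which for small $\eps$ dominates the higher-order pressure contribution, while in \eqref{seconderrbd} the square-root term becomes $\|D(\bu-I_h(\bu))\|^{1/2}\sim h^{1/2}$ and dominates the $O(h)$ and pressure terms, giving the $\eps$-independent bound $\mathcal{O}(h^{1/2})$. Taking the minimum again produces the second estimate. (Here the pressure contributions are of equal or higher order in $h$ and, crucially, are never multiplied by $\eps^{-1}$, so they do not affect either rate.)

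This is a direct corollary, so there is no genuine obstacle; the only point requiring care is verifying that in each inequality the correct single term dominates---the $\eps^{-1}$-weighted interpolation term in \eqref{firsterrbd}, and the square-root term in \eqref{seconderrbd}, which is of lower order in $h$ than the other two but carries no $\eps$ dependence. One must also recall that \eqref{seconderrbd} is only available once $h$ is small enough that the condition of Remark \ref{rem1} holds, which is precisely why the hypothesis that $h$ is sufficiently small is invoked before the minimum of the two rates can be asserted.
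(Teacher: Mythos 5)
Your proposal is correct and is exactly the intended derivation: the paper states this corollary without a separate proof, and the natural argument is precisely what you do, namely inserting the standard interpolation and pressure best-approximation rates for each element pair into the two bounds \eqref{firsterrbd} and \eqref{seconderrbd} of Theorem \ref{convthm} and identifying the dominant terms ($\eps^{-1}$-weighted interpolation term in the first bound, square-root term in the second). Your attention to the fact that the pressure term carries no $\eps^{-1}$ factor and to the smallness condition on $h$ from Remark \ref{rem1} is also consistent with the paper.
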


\begin{proof}[Proof of Theorem \ref{convthm}]
First we will prove the bound \eqref{firsterrbd}.  The true solution $(\bu,p)$ of the regularized Bingham problem satisfies (\ref{weak}a) with $\bu\in\bV$ and $\bv=\bv_h\in \bV_h$.  Subtracting  (\ref{discdivfreeprob}) from this provides
\begin{align}\label{diffsebybilinear}
	a_{\varepsilon}(\bu,\bv_h)
	-
	a_{\varepsilon}(\bu_h,\bv_h)
	-
	b(p-q_h,\bv_h)
	=
	0,
\end{align}
which can be written as
\begin{align}\label{diffs}
	2\mu ( D\be, D\bv_h ) 
	+\tau_s
	\left(
	\frac{D\bu}{|D\bu|_{\varepsilon}} 
	- \frac{D\bu_h}{|D\bu_h|_{\varepsilon}},D\bv_h
	\right)
	-(p- q_h, \nabla\cdot\bv_h)
	=
	0,
\end{align}
where the error $\be=\bu-\bu_h$ is decomposed as $\be=\bu-I_h(\bu)+I_h(\bu)-\bu_h=\bleta+\bphi_h$ with $I_h(\bu)$ the interpolation of $\bu$ in $\bV_h$.
First, we utilize the monotonicity of $a_{\eps}$ to establish the convergence.

Let's choose $\bv_h=\bphi_h$, and add and subtract $a_{\varepsilon}(I_h(\bu),\bphi_h)$ on right hand side. Then we get
\begin{align*}
	a_{\varepsilon}(I_h(\bu),\bphi_h)
	-
	a_{\varepsilon}(\bu_h,\bphi_h)
	=
	b(p-q_h,\bphi_h)
	+
	a_{\varepsilon}(I_h(\bu),\bphi_h)
	-
	a_{\varepsilon}(\bu,\bphi_h)
	,
\end{align*}
Then, the monotonicity of $a_{\eps}$ provides
\begin{align}
	2\mu\|D\bphi_h\|^2
	\leq
	b(p-q_h,\bphi_h)
	+
	a_{\varepsilon}(I_h(\bu),\bphi_h)
	-
	a_{\varepsilon}(\bu,\bphi_h)
	. \label{CCC1}
\end{align}
Rewriting \eqref{CCC1} by expanding the $b$ and $a_{\eps}$ forms gives
\begin{align}\label{eqn:3terms}
	2\mu\|D\bphi_h\|^2
	\leq
	-2\mu (D\bleta,D\bphi_h)
	+
	\tau_s
	\left(
	\frac{D I_h(\bu)}{|D I_h(\bu)|_{\varepsilon}} - \frac{D\bu}{|D \bu|_{\varepsilon}},D\bphi_h
	\right)
	.
\end{align}
The first term of \eqref{eqn:3terms} is bounded by Cauchy-Schwarz, Korn's and Young's inequalities.
\begin{align*}
	(p- q_h, \nabla\cdot\bphi_h) 
	\leq 
	\|p- q_h\| \|\nabla\cdot\bphi_h\|
	\leq
	\sqrt{d}  \|p- q_h\| \|\nabla\bphi_h\| 
	\leq &
	C_K
	\sqrt{d}  \|p- q_h\| \|D\bphi_h\|\\
	\leq &
	\frac{3C_K^2\mu^{-1} d}{4}  \|p- q_h\|^2 +\frac{\mu}{3} \|D\bphi_h\|^2.
\end{align*}
The second term is be bounded by Cauchy-Schwarz and Young's inequalities.
\begin{align*}
	|2\mu(D\bleta,D\bphi_h)|
	\leq
	2\mu\|D\bleta\| \| D\bphi_h\|
	\leq
	3\mu\|D\bleta\|^2 + \frac{\mu}{3} \| D\bphi_h\|^2.
\end{align*}
For the last term of \eqref{eqn:3terms}, we first add and subtract $\frac{D \bu}{|DI_h(\bu)|_{\varepsilon}}$, and then apply reverse triangle inequality, H\"older's inequality ($L^{\infty}-L^2-L^2$), the upper bound
$ \| | D \cdot |_{\eps}^{-1} \|_{L^{\infty}(\Omega)} \le \eps^{-1}$ and  $||D\bu|_{\eps}^{-1} D(\bu)|\leq 1$, and Young's inequality to get	
\begin{align*}
	\tau_s
	\left(
	\frac{D I_h(\bu)}{|D I_h(\bu)|_{\varepsilon}} - \frac{D\bu}{|D \bu|_{\varepsilon}},D\bphi_h
	\right)
	&\leq
	\left| -
	\tau_s
	\left(
	\frac{D \bleta}{|D I_h(\bu)|_{\varepsilon}}
	,D\bphi_h
	\right)
	\right|
	+
	\left|
	\tau_s
	\left(
	\left(\frac{1}{|DI_h(\bu)|_{\varepsilon}}- \frac{1}{|D \bu|_{\varepsilon}}\right) D \bu,D\bphi_h
	\right)
	\right|
	\\
	&\leq
	\tau_s
	\|\ |D I_h(\bu)|^{-1}_{\varepsilon} \|_{L^{\infty}(\Omega)}
	\| D \bleta \|\
	\|D\bphi_h\|
	+
	\tau_s
	\int_{\Omega}
	\frac{|D \bleta|}{|DI_h(\bu)|_{\varepsilon}|D \bu|_{\varepsilon}}
	|D \bu| |D\bphi_h|
	\\
	&\leq
	2\tau_s
	\eps^{-1}
	\| D \bleta \|\
	\|D\bphi_h\|
	\\
	&\leq
	3\tau_s^2
	\eps^{-2}\mu^{-1}
	\| D \bleta \|^2
	+
	\frac{\mu}{3}
	\|D\bphi_h\|^2.
\end{align*}
By combining the bounds on all three terms of \eqref{eqn:3terms}, 
we obtain
\begin{align*}
	\mu\|D\bphi_h\|^2
	\leq
	\frac{3C_K^2\mu^{-1} d}{4}  \|p- q_h\|^2
	+
	\left(
	3\mu
	+
	3\taus^2 \eps^{-2} \mu^{-1}
	\right)
	\|D\bleta\|^2.
\end{align*}
Then, by taking the square root of each side and using the  triangle inequality, the bound \eqref{firsterrbd} is revealed.


Next we show the second bound, namely \eqref{seconderrbd}.  Choose $\bv_h=\bphi_h$ in (\ref{diffs}), and then add and subtract $\frac{D\bu_h}{|D\bu|_{\varepsilon}}$
from the second term to obtain 
%
\begin{align}\label{eqn:4term}
&2\mu \|D\bphi_h\|^2 
+
\tau_s\||D\bu|^{-1/2}_{\varepsilon}D \bphi_h\|^2
\nonumber \\
&=
2\mu(D\bleta,D\bphi_h)
-
\tau_s\left(\frac{D\bleta}{|D\bu|_{\varepsilon}} ,D\bphi_h\right)
-
\tau_s\left(
\left(\frac{1}{|D\bu|_{\varepsilon}} 
-
\frac{1}{|D\bu_h|_{\varepsilon}}\right)D\bu_h,D\bphi_h\right)
+(p- q_h, \nabla\cdot\bphi_h).
\end{align}
The first term on the right hand side of \eqref{eqn:4term} is bounded using Cauchy-Schwarz and Young's inequalities, by
\begin{align*}
2\mu(D\bleta,D\bphi_h)
\leq
2\mu\|D\bleta\| \| D\bphi_h\|
\leq
2\mu\|D\bleta\|^2 + \frac{\mu}{2} \| D\bphi_h\|^2.
\end{align*}
Under the assumption that $h$ is sufficiently small so that $ \| |D\bu|^{-1}_{\eps}  |D\bphi_h|\| \leq1$, the second term of 
\eqref{eqn:4term} satisfies the bound
\begin{align*}
\tau_s\left(\frac{D \bleta}{|D\bu|_{\varepsilon}} ,D\bphi_h\right)
\leq
\tau_s \| D \bleta \|  \| |D\bu|_{\varepsilon}^{-1}  D\bphi_h \|
\leq
\tau_s| \|D\bleta\|.
\end{align*}
To bound the third term of \eqref{eqn:4term}, let's first consider
\begin{align*}
\frac{1}{|D\bu|_{\varepsilon}} 
-
\frac{1}{|D\bu_h|_{\varepsilon}}
=
\frac{|D\bu_h|_{\varepsilon}-|D\bu|_{\varepsilon}}{|D\bu|_{\varepsilon}|D\bu_h|_{\varepsilon}} 
&=
\frac{|D\bu_h|^2_{\varepsilon}-|D\bu|^2_{\varepsilon}}{|D\bu|_{\varepsilon}|D\bu_h|_{\varepsilon}(|D\bu_h|_{\varepsilon}+|D\bu|_{\varepsilon})} 
\\
&
\leq
\frac{D\bu_h:D\bu_h-D\bu:D\bu}{|D\bu|_{\varepsilon}|D\bu_h|_{\varepsilon}(|D\bu_h|_{\varepsilon}+|D\bu|_{\varepsilon})} 
\\
&
=
\frac{D\be (D\bu_h + D\bu)}{|D\bu|_{\varepsilon}|D\bu_h|_{\varepsilon}(|D\bu_h|_{\varepsilon}+|D\bu|_{\varepsilon})}.
\end{align*}
So the third term of \eqref{eqn:4term} satisfies
\begin{align*}
\tau_s\left(
\left(\frac{1}{|D\bu|_{\varepsilon}} 
-
\frac{1}{|D\bu_h|_{\varepsilon}}\right)D\bu_h,D\bphi_h\right)
&\leq
\tau_s
\left(
\frac{D\bphi_h (D\bu_h + D\bu)}{|D\bu|_{\varepsilon}|D\bu_h|_{\varepsilon}(|D\bu_h|_{\varepsilon}+|D\bu|_{\varepsilon})} 
D\bu_h,D\bphi_h\right) 
\\&\,\,\,\,+
\tau_s
\left(
\frac{D\bleta (D\bu_h + D\bu)}{|D\bu|_{\varepsilon}|D\bu_h|_{\varepsilon}(|D\bu_h|_{\varepsilon}+|D\bu|_{\varepsilon})} 
D\bu_h,D\bphi_h\right) 
\\
&\leq
\tau_s
\int_{\Omega}
\frac{|D\bphi_h| |D\bu_h + D\bu|}
{|D\bu|_{\varepsilon}|D\bu_h|_{\varepsilon}(|D\bu_h|_{\varepsilon}+|D\bu|_{\varepsilon})} 
|D\bu_h| |D\bphi_h|
\\&\,\,\,\,+
\tau_s
\int_{\Omega}
\frac{|D\bleta| |D\bu_h + D\bu|} {|D\bu|_{\varepsilon}|D\bu_h|_{\varepsilon}(|D\bu_h|_{\varepsilon}+|D\bu|_{\varepsilon})} 
|D\bu_h| |D\bphi_h|
\\
&\leq
\tau_s
\int_{\Omega}
\frac{|D\bphi_h|^2 }
{|D\bu|_{\varepsilon}} 
+
\tau_s
\int_{\Omega}
\frac{|D\bleta| } {|D\bu|_{\varepsilon}} 
|D\bphi_h|
\\
&\leq
\taus \||D\bu|^{-1/2}_{\eps}D\bphi_h\|^2 +\tau_s  \|D\bleta\|,
\end{align*}
by the triangle inequality, using $ \| |D\bu|^{-1}_{\eps} D\bu \|_{L^{\infty}(\Omega)} \leq1$, and assuming 
$h$ is small enough so that $\| |D\bu|^{-1}_{\eps} D\bphi_h \| \leq1$.

The last term of \eqref{eqn:4term} can be bounded by Cauchy-Schwarz, Korn's and Young's inequalities, by
\begin{align*}
(p- q_h, \nabla\cdot\bphi_h) 
\leq 
\|p- q_h\| \|\nabla\cdot\bphi_h\|
\leq
\sqrt{d}  \|p- q_h\| \|\nabla\bphi_h\| 
\leq &
C_K
\sqrt{d}  \|p- q_h\| \|D\bphi_h\|\\
\leq &
C_K^2 \mu^{-1} d  \|p- q_h\|^2 +\frac{\mu}{2} \|D\bphi_h\|^2.
\end{align*}
Putting together the bounds of each term of \eqref{eqn:4term},
we obtain
\begin{align*}
\|D\bphi_h\|^2 
\leq
2\|D\bleta\|^2 
+
2\mu^{-1}\taus\|D\bleta\|
+
C_K^2 \mu^{-2} d  \|p- q_h\|^2. 
\end{align*}
Finally, taking square root of each side and using triangle inequality gives
\eqref{seconderrbd}.


\end{proof}
{\color{black}
\subsection{Analytical test for convergence verification}

In this subsection, we consider the same analytical test \eqref{analiyticalsoln} to study spatial convergence of numerical solution of regularized Bingham equation with the same discretization setting used in Section 4.1 (recall solver tolerance is $10^{-8}$ in the $H^1$ norm).  {\color{black}All results are obtained by AA enhanced Picard iteration with depth $m=10$, since we don't observe change in solutions from iterations with different depths.}

Table \ref{table:rateH1} shows the errors (true solution compared to numerical solution of the regularized model) and convergence rates for different choices of $h$ and $\eps$ by calculating numerical error in the $\bH^1$ norm.  We note this error includes both numerical error of the computed regularized model to the regularized model, which we prove above is at most $O(h)$ for small $\epsilon$, as well as the model consistency error, which is discussed above to be at most $O(\epsilon^{1/2})$.  Hence for larger $h$, we expect numerical error to dominate but for smaller $h$ we expect model consistency error to dominate once the numerical error is smaller than the consistency error.

The table shows that for $\epsilon=10^{-4}$, numerical error dominates until about $h=1/128$, where error no longer decreases with shrinking $h$.  The rates are very choppy for $h\ge 1/128$, but are consistent in the average with $O(h)$.  For  $\epsilon=10^{-8}$, however, numerical error appears to be dominant for all $h$'s tested (again consistent with $O(h)$ in the average), as we see error decreasing significantly from $1/128$ to $1/256$.  Hence for $h\ge 1/64$, we see only minor differences in error between the $\epsilon=10^{-4}$ and $\epsilon=10^{-8}$ solutions, but for $h\le 1/128$ we observe the $\epsilon=10^{-8}$ is better, in fact by a whole order of magnitude when $h=1/256$.

}
%

\begin{table}[H]
	\centering
	\small\addtolength{\tabcolsep}{-2.5pt}
	\begin{tabular}{c||cc||cc}
		\hline
		\multicolumn{1}{c||}{$\eps\rightarrow$}&  \multicolumn{2}{|c||}{$10^{-4}$}&    \multicolumn{2}{|c}{$10^{-8}$}\\
		\hline
		$h\downarrow$ &  $\|D(\bu-\bu_k)\|$   & Rates  &  $\|D(\bu-\bu_k)\|$ & Rates \\
		\hline
		1/4   &4.7326e-03& -      &4.7126e-03& -    \\
		1/8   &5.2061e-03&-0.14   &5.3075e-03& -0.17\\
		1/16  &8.7186e-04& 2.58   &9.3267e-04& 2.51 \\
		1/32  &6.5649e-04& 0.41   &6.6056e-04& 0.50 \\
		1/64  &1.8945e-04& 1.79   &1.4539e-04& 2.18 \\
		1/128 &1.3906e-04& 0.45   &8.7903e-05& 0.73 \\
		1/256 &1.1311e-04& 0.30   &1.2947e-05& 2.76 \\
		\hline
	\end{tabular}\caption{ $H^1$ Errors and  Convergence Rates when $\eps=10^{-4},10^{-8}$.}\label{table:rateH1}
\end{table}

\section{Conclusion}
We studied herein the acceleration of a Picard iteration to solve a finite element discretization of
the regularized Bingham equations, 
and spatial convergence of the solution to the discrete nonlinear problem.
We proved that the fixed point operator associated with the Picard iteration for the regularized Bingham equations satisfies regularity properties which allow the AA theory of \cite{PR21} to be applied, 
and thus the iteration is accelerated through the scaling of its linear convergence rate
by the gain factor of the AA optimization problem.  
We demonstrated numerically with three test problems that the Picard iteration alone may not be an effective solver for the regularized Bingham equations due to the large number of iterations required, but with AA (and in particular with $m\ge 5$), it can be an effective and efficient solver.  For the spatial convergence of the finite element discretization, we showed that optimal convergence in $h$ can be achieved but that it depends inversely on $\eps$.  We further showed that for $h$ sufficiently small with respect to $\eps$, suboptimal convergence in $h$ which is independent of $\eps$ can also be achieved.

\section{Acknowledgments}
Authors Leo Rebholz and Duygu Vargun were partially supported by National Science Foundation grant DMS 2011490. Author Sara Pollock was partially supported by National Science Foundation grant DMS 2011519.

\bibliographystyle{plain}
\bibliography{graddiv}

\end{document}